\documentclass[11pt,fullpage]{article}

\voffset -2.5cm
\textwidth 16.5cm \textheight 23.0cm \oddsidemargin 0.0pt

\usepackage{graphicx}
\usepackage{latexsym}
\usepackage{color}
\usepackage{multirow}
\usepackage{amsmath}
\usepackage{graphicx}
\usepackage{amssymb}
\usepackage{amsthm}
\usepackage{amsfonts}

\newtheorem{theorem}{Theorem}
\newtheorem{lemma}{Lemma}

\newtheorem*{assumption}{Assumption}
\newtheorem{corollary}{Corollary}

\long\def\ignore#1{}

\newcommand{\be}{\begin{equation}}
\newcommand{\ee}{\end{equation}}
\newcommand{\beqn}{\begin{eqnarray}}
\newcommand{\eeqn}{\end{eqnarray}}

\newcommand{\bfm}[1]{\mbox{\boldmath $#1$}}
\newcommand{\bbeta}{\bfm{\beta}}
\newcommand{\btheta}{\bfm{\theta}}
\newcommand{\bzeta}{\bfm{\zeta}}
\newcommand{\bxi}{\bfm{\xi}}

\newcommand{\bY}{{\bf Y}}
\newcommand{\bx}{{\bf x}}

\newcommand{\mE}{\mathbb{E}}
\newcommand{\cu}{{\cal U}}
\newcommand{\cl}{{\cal L}}
\newcommand{\cb}{{\cal B}}

\newcommand{\M}{\mathfrak{M}}

\begin{document}

\title{\bf Model selection and minimax estimation in generalized linear models}
\author{{\bf Felix Abramovich} \\
Department of Statistics and Operations Research \\
Tel Aviv University
\and
{\bf Vadim Grinshtein} \\
Department of Mathematics and Computer Science \\
The Open University of Israel}

\date{}

\maketitle

\begin{abstract}
We consider model selection in generalized linear models (GLM) for
high-dimensional data and propose a wide class of
model selection criteria based on
penalized maximum likelihood with a complexity penalty on the model size.
We derive a general nonasymptotic upper bound for the Kullback-Leibler risk
of the resulting estimators and establish the corresponding minimax lower bounds for sparse GLM.
For the properly chosen (nonlinear) penalty,
the resulting penalized maximum likelihood estimator is shown to be
asymptotically minimax and adaptive to the unknown sparsity. We discuss
also possible extensions of the proposed approach to model selection
in GLM under additional structural constraints and aggregation.
\end{abstract}

\bigskip

\section{Introduction} \label{sec:intr}
Regression analysis of high-dimensional data, where the number of
potential explanatory variables (predictors) $p$ might be even large
relative to the sample size $n$ faces a severe ``curse of
dimensionality'' problem. Reducing the dimensionality of the model
by selecting a sparse subset of ``significant'' predictors becomes
therefore crucial. The interest to model selection in regression
goes back to seventies (e.g., seminal papers of Akaike, 1973;
Mallows, 1973 and Schwarz, 1978), where the considered ``classical''
setup assumed $p \ll n$. Its renaissance started in 2000s with the
new challenges brought to the door of statistics by exploring data,
where $p$ is of the order of $n$ or even larger. Analysing the ``$p$
larger than $n$'' setup required novel approaches and techniques, and
led to novel model selection procedures. The corresponding theory
(risk bounds, oracle inequalities, minimax rates, variable selection consistency, etc.) for model
selection in Gaussian linear regression has been intensively
developed in the literature in the last decade. See Foster \& George (1994),
Birg\'e \& Massart (2001, 2007), Chen \& Chen (2008),
Bickel {\em et al.} (2009), Abramovich \&
Grinshtein (2010), Raskutti {\em et al.} (2011), Rigollet \&
Tsybakov (2011) among many others.
A review on model selection in Gaussian regression for ``$p$ larger than $n$''
setup can be found in Verzelen (2012).

Generalized linear models (GLM) is a generalization of Gaussian linear
regression, where the distribution of response is not necessarily
normal but belongs to the exponential family of distributions.
Important examples include binomial and Poisson data arising in
a variety of statistical applications. Foundations of a general theory for GLM
have been developed in McCullogh \& Nelder (1989).

Although most of the
proposed model selection criteria for Gaussian regression have been
extended and are nowadays widely used in GLM (e.g., AIC of Akaike, 1973
and BIC of Schwarz, 1978), not much is known on their theoretical
properties in the general GLM setup. There are some results on variable
selection consistency of several model selection criteria
(e.g., Fan \& Song, 2010; Chen \& Chen,
2012), but a rigorous theory of model selection for estimation and prediction in
GLM remains essentially {\em terra incognita}. We can mention
van de Geer (2008) that investigated the Lasso estimator
in GLM and
Rigollet (2012) that considered 
aggregation problem for GLM. The presented paper intends to
contribute to fill this gap.

We introduce a wide class of model selection criteria for GLM based on the penalized maximum
likelihood estimation with a complexity penalty on the model size. In particular, it includes
AIC, BIC and some other well-known criteria.
In a way, this approach can be viewed as an extension of that of
Birg\'e \& Massart (2001, 2007) for Gaussian regression.
We derive a general nonasymptotic upper bound for the Kullback-Leibler risk
of the resulting estimator.
Furthermore, for the properly chosen penalty
we establish its asymptotic minimaxity and 
adaptiveness to the unknown sparsity. Possible extensions to model selection
under additional structural constraints and aggregation are also discussed.

The paper is organized as follows.
The penalized maximum likelihood model selection procedure for GLM is
introduced in Section \ref{sec:ms}. Its main theoretical properties are
presented in Section \ref{sec:main}. In particular, we derive the upper
bound for its Kullback-Leibler risk
and corresponding minimax lower bounds, and establish its asymptotic minimaxity
over various sparse settings. We illustrate the obtained general results on the example of logistic regression. Extensions to model selection
under structural constraints and aggregation are discussed in
Section \ref{sec:extensions}. All the proofs are given in the Appendix.

\section{Model selection procedure for GLM} \label{sec:ms}
\subsection{Notation and preliminaries} \label{subsec:setup}
Consider a GLM setup with a response variable $Y$ and a set of $p$
predictors $x_1,...,x_p$. We observe a series of independent
observations $(\bx_i,Y_i),\;i=1,\ldots, n$, where the design points $\bx_i \in
\mathbb{R}^p$ are deterministic, and the distribution $f_{\theta_i}(y)$ of $Y_i$
belongs to a (one-parameter) natural exponential family with a natural
parameter $\theta_i$ and a scaling parameter $a$:
\be
\label{eq:model} f_{\theta_i}(y)=\exp\left\{\frac{y \theta_i -
b(\theta_i)}{a}+c(y,a)\right\}
\ee
The function $b(\cdot)$ is assumed to be twice-differentiable.
In this case
$\mE(Y_i)=b'(\theta_i)$ and $Var(Y_i)=ab''(\theta_i)$ (see McCullagh
\& Nelder, 1989).
To complete GLM we assume the canonical link
$\theta_i=\bbeta^{t}\bx_i$ or, equivalently, in the matrix form,
$\btheta=X\bbeta$, where $X_{n \times p}$ is the design matrix and
$\bbeta \in \mathbb{R}^p$ is a vector of the unknown regression
coefficients.

In what follows we assume
the following assumption on the parameter space $\Theta$
and the second derivative $b''(\cdot)$:
\begin{assumption}[\bf A] \label{as:A}
\noindent
\begin{enumerate}
\item Assume that $\theta_i \in \Theta$, where the parameter
space $\Theta \subseteq \mathbb{R}$ is a closed (finite or infinite) interval.
\item Assume that there exist constants $0 < \cl \leq \cu < \infty$ such that the
function $b''(\cdot)$ satisfies the following conditions:
\begin{enumerate}
\item $\sup_{t \in \mathbb{R}} b''(t) \leq \cu$
\item $\inf_{t \in \Theta} b''(t) \geq \cl$
\end{enumerate}
\end{enumerate}
\end{assumption}
Similar assumptions were imposed in van de Geer (2008) and Rigollet (2012). Conditions on $b''(\cdot)$ in
Assumption (A) are intended to exclude two degenerate
cases, where the variance $Var(Y)$ is infinitely large or small. They
also ensure strong convexity of $b(\cdot)$ over
$\Theta$. For Gaussian distribution, $b''(\theta)=1$ and, therefore,
$\cl=\cu=1$ for any $\Theta$. For the binomial distribution,
$b''(\theta)=\frac{e^\theta}{(1+e^\theta)^2},\;\cu=\frac{1}{4}$,
while the condition $(b)$ is equivalent to the boundedness of $\theta:
\Theta=\{\theta: |\theta| \leq C_0\}$, where $\cl=\frac{e^{C_0}}{(1+e^{C_0})^2}$.

Let $f_{\btheta}$ and $f_{\bzeta}$ be two possible joint distributions of
the data from the exponential family with $n$-dimensional vectors of natural parameters
$\btheta$ and $\bzeta$ correspondingly.
A Kullback-Leibler divergence $KL(\btheta, \bzeta)$ between
$f_{\btheta}$ and $f_{\bzeta}$ is then
\be
\begin{split}
KL(\btheta, \bzeta)&=\mE_{\btheta}\left\{ \ln
\left(\frac{f_{\btheta}(\bY)}{f_{\bzeta}(\bY)}\right)\right\}=\frac{1}{a}~\mE_{\btheta}
\left\{\sum_{i=1}^n Y_i(\theta_i-\zeta_i)-b(\theta_i)+b(\zeta_i)\right\} \\
&=
\frac{1}{a}\sum_{i=1}^n \left\{b'(\theta_i)(\theta_i-\zeta_i)-b(\theta_i)+b(\zeta_i)\right\}=
\frac{1}{a}\left(b'(\btheta)^t(\btheta-\bzeta)-(b(\btheta)-b(\bzeta))^t
{\bf 1}\right),
\nonumber
\end{split}
\ee
where $b(\btheta)=(b(\theta_1),\cdots,b(\theta_n))$ and $b(\bzeta)=(b(\zeta_1),\ldots,b(\zeta_n))$.

For a given estimator $\widehat{\btheta}$ of the unknown $\btheta$ consider the Kullback-Leibler loss
$KL(\btheta,\widehat{\btheta})$ -- the Kullback-Leibler divergence between the
true distribution $f_{\btheta}$ of the data and 
its empirical distribution $f_{\widehat{\btheta}}$ generated by $\widehat{\btheta}$. The goodness of $\widehat{\btheta}$ is measured
by the corresponding Kullback-Leibler risk:
\be \label{eq:mE}
\mE KL(\btheta,\widehat{\btheta})=
\frac{1}{a}\left(b'(\btheta)^t(\btheta-\mE(\widehat{\btheta}))-(b(\btheta)-\mE b(\widehat{\btheta}))^t,
{\bf 1}\right)
\ee
where the expectation is taken w.r.t. the true distribution $f_{\btheta}$.
In particular, for the Gaussian case,
where $b(\theta)=\theta^2/2$ and $a=\sigma^2$, $\mE KL(\btheta,\widehat{\btheta})$ is the mean squared error
$E||\widehat{\btheta}-\btheta||^2$ divided by the constant $2\sigma^2$. The binomial
distribution will be considered in more details in Section \ref{subsec:logist} below.

\subsection{Penalized maximum likelihood model selection} \label{subsec:penlik}
Consider a GLM (\ref{eq:model}) with a vector of natural parameters
$\btheta$ and the canonical link $\btheta=X\bbeta$. 

For a given model $M \subset \{1,\ldots,p\}$ consider
the corresponding maximum likelihood estimator (MLE) $\widehat{\bbeta}_M$ of
$\bbeta$:
\be \label{eq:mle}
\widehat{\bbeta}_M=\arg \max_{\widetilde{\bbeta} \in \cb_M} \ell(\widetilde{\bbeta})=
\arg \max_{\widetilde{\bbeta} \in \cb_M}
\left\{\sum_{i=1}^n (Y_i (X\widetilde{\beta})_i-b((X\widetilde{\beta})_i)\right\}
=
\arg \max_{\widetilde{\bbeta} \in \cb_M} \left\{\bY^t X\widetilde{\bbeta}-b(X\widetilde{\bbeta})^t {\bf 1}\right\},
\ee
where $\cb_M =\{\bbeta \in \mathbb{R}^p: \beta_j=0\;{\rm if}\; j \not\in M\; {\rm and}\; \bbeta^t \bx_i \in
\Theta\;{\rm for\;all}\;i=1,\ldots,n \}$.
Note that generally $\cb_M$ depends on a given design matrix $X$.
Except Gaussian regression, the MLE $\widehat{\bbeta}_M$ in (\ref{eq:mle})
is not available in the closed
form but can be obtained numerically by the iteratively reweighted
least squares algorithm (see McCullogh \& Nelder, 1989, Section 2.5).

The MLE for $\btheta$ is $\widehat{\btheta}_M=X\widehat{\bbeta}_M$, and the
ideally selected model (oracle choice)
is then the one that minimizes $\mE
KL(\btheta,\widehat{\btheta}_M)=\frac{1}{a}
\left(b'(\btheta)^t(\btheta-\mE(\widehat{\btheta}_M))-(b(\btheta)- \mE
b(\widehat{\btheta}_M))^t {\bf 1}\right) $ or, equivalently, $
-b'(\btheta)^t\mE(\widehat{\btheta}_M)+\mE b(\widehat{\btheta}_M)^t {\bf 1}
$ over $M$ from the set of all $2^p$ possible models $\M$. An oracle chosen model depends evidently on the
unknown $\btheta$ and can only be used as a benchmark for
any available model selection procedure.

Consider instead an empirical analog $KL([b']^{-1}(\bY),\widehat{\btheta}_M)$
of $\mE KL(\btheta,\widehat{\btheta}_M)$, where
the true $\mE \bY = b'(\btheta)$ is replaced by $\bY$. A naive approach of
minimizing $KL([b']^{-1}(\bY),\widehat{\btheta}_M)$ yields maximizing
$\bY^t \widehat{\btheta}_M-b(\widehat{\btheta}_M)^t{\bf 1}$ (or, equivalently,
maximizing $\ell(\widehat{\bbeta}_M)$) over $M \in \M$ and
obviously leads to the saturated model.

A common remedy to avoid such a trivial unsatisfactory choice is to add a
complexity penalty
$Pen(|M|)$ on the model size $|M|$ and consider the corresponding {\em penalized}
maximum likelihood model selection criterion of the form
\be \label{eq:penlik}
\widehat{M}=\arg \max_{M \in \M} \left\{\ell(\widehat{\bbeta}_M) - Pen(|M|)\right\}=
\arg \min_{M \in \M} \left\{\frac{1}{a}
\left(b(X\widehat{\bbeta}_M)^t{\bf 1}-\bY^t X\widehat{\bbeta}_M\right)+Pen(|M|) \right\},
\ee
where the MLE $\widehat{\bbeta}_M$ for a given model $M$ are given in (\ref{eq:mle}).
The properties of the resulting model selection procedure depends crucially
on the choice of the complexity penalty.
The commonly used criteria for model selection in GLM are $AIC=-2 \ell(\widehat{\bbeta}_M)+ 2|M|$ of Akaike (1973),
$BIC=-2\ell(\widehat{\bbeta}_M)+ |M| \ln n$ of Schwarz (1973) and its
extended version $EBIC=-2\ell(\widehat{\bbeta}_M)+ |M| \ln n+2\gamma |M| \ln p,
\; 0 \leq \gamma \leq 1$ of Chen \& Chen (2012)
correspond to $Pen(|M|)=|M|$, $Pen(|M|)=\frac{|M|}{2} \ln n $ and
$Pen(|M|)=\frac{|M|}{2} \ln n+\gamma |M| \ln p$ in (\ref{eq:penlik})
respectively. A similar extension of RIC criterion
$RIC=-2\ell(\widehat{\bbeta}_M)+ 2|M| \ln p$ of Foster \& George (1994)
yields $Pen(|M|)=|M| \ln p$. Note that all the above penalties increase
{\em linearly} with a model size $|M|$.

\section{Main results} \label{sec:main}
In this section we investigate theoretical properties of the penalized maximum
likelihood model selection procedure proposed in Section \ref{subsec:penlik}
and discuss the optimal choice for the complexity penalty $Pen(|M|)$ in
(\ref{eq:penlik}). We start from deriving a (nonasymptotic) upper bound for the expected
Kullback-Leibler risk of the resulting estimator for a given
$Pen(|M|)$ and then establish its asymptotic minimaxity for a properly chosen
penalty. To illustrate the general results we consider the example of logistic regression.

\subsection{General upper bound for the Kullback-Leibler risk}
\label{subsec:generalupper} Consider a GLM (\ref{eq:model}) with the
canonical link $\btheta=X\bbeta$ and the natural parameters
$\theta_i \in \Theta$ satisfying Assumption (A). Let $r=rank(X)$.
The number of possible predictors $p$ might be larger than the
sample size $n$. We assume that any $r$ columns of $X$ are linearly
independent and consider only models of sizes at most $r$ in
(\ref{eq:penlik}) since otherwise, for any $\bbeta \in \cb_M$, where
$|M|>r$, there necessarily exists another $\bbeta^*$ with at most
$r$ nonzero entries such that $X\bbeta=X\bbeta^*$.

We now present an upper bound for the Kullback-Leibler risk
of the proposed maximum penalized likelihood estimator
valid for a wide class of penalties. Moreover, it does not
require the GLM assumption on the canonical link $\btheta=X\bbeta$ and
can still be applied when a link function is misspecified.

\begin{theorem} \label{th:generalupper}
Consider a GLM (\ref{eq:model}),
where $\theta_i \in \Theta,\;i=1,\ldots,n$ and let Assumption (A) hold.

Let $L_k,\;k=1,\ldots,r$ be a sequence of positive weights such that
\be \label{eq:weights}
\sum_{k=1}^{r-1} \binom{p}{k} e^{-kL_k}+e^{-rL_r} \leq S
\ee
for some absolute constant $S$ not depending on $r$, $p$ and $n$.

Assume that the complexity penalty $Pen(\cdot)$
in (\ref{eq:penlik}) is such that
\be \label{eq:penalty}
Pen(k) \geq 2~\frac{\cu}{\cl}~ k (A+2\sqrt{2 L_k}+4 L_k),\;\;\;k=1,\ldots,r
\ee
for some $A>1$.

Let $\widehat{M}$ be a model selected in (\ref{eq:penlik}) with $Pen(\cdot)$
satisfying (\ref{eq:penalty}) and $\widehat{\bbeta}_{\widehat M}$
be the corresponding MLE estimator (\ref{eq:mle}) of $\bbeta$.
Then,
\be \label{eq:generalupper}
\mE KL(\btheta, X\widehat{\bbeta}_{\widehat M})
\leq \frac{4}{3}~ \inf_{M \in \M} \left\{\inf_{\widetilde{\bbeta} \in \cb_M} KL(\btheta,X\widetilde{\bbeta})+Pen(|M|)\right\}+
\frac{16}{3}~\frac{\cu}{\cl}~ \frac{2A-1}{A-1}~ S
\ee
\end{theorem}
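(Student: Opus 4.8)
The plan is to follow the empirical-process route underlying oracle inequalities of Birg\'e--Massart type, adapted to the likelihood geometry of the exponential family. Write $\gamma_n(\bzeta)=\frac1a(b(\bzeta)^t\mathbf 1-\bY^t\bzeta)$, so that the criterion (\ref{eq:penlik}) selects $\widehat M$ as the minimiser of $\gamma_n(\widehat\btheta_M)+Pen(|M|)$, and set $\bxi=\bY-b'(\btheta)$ (the centred response, $\mE\bxi=0$). The crucial structural fact, special to the canonical link, is that the centred criterion is \emph{linear} in $\bzeta$: a direct computation gives $\mE\gamma_n(\bzeta)-\mE\gamma_n(\btheta)=KL(\btheta,\bzeta)$ and $\gamma_n(\bzeta)-\mE\gamma_n(\bzeta)=-\frac1a\bxi^t\bzeta$. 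First I would record the resulting basic inequality: since $\widehat\btheta=X\widehat\bbeta_{\widehat M}$ minimises the penalised criterion while $\widehat\bbeta_M$ maximises the likelihood within $\cb_M$, for every $M$ and every $\widetilde\bbeta\in\cb_M$ (with $\widetilde\btheta=X\widetilde\bbeta$)
\be
KL(\btheta,\widehat\btheta)+Pen(|\widehat M|)\le KL(\btheta,\widetilde\btheta)+Pen(|M|)+\tfrac1a\,\bxi^t(\widehat\btheta-\widetilde\btheta).
\ee
Everything then reduces to controlling the single random linear term on the right.

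Next I would convert the Kullback--Leibler divergence into a quadratic form to make the absorption step transparent. A second-order Taylor expansion of $b$ gives $KL(\btheta,\bzeta)=\frac1{2a}\sum_i b''(\xi_i^*)(\theta_i-\zeta_i)^2$ for intermediate points $\xi_i^*$, so Assumption~(A) yields $\frac{\cl}{2a}\|\btheta-\bzeta\|^2\le KL(\btheta,\bzeta)\le\frac{\cu}{2a}\|\btheta-\bzeta\|^2$ whenever the fitted values lie in $\Theta$ (which they do by the definition of $\cb_M$). Because $\widehat\btheta$ lies in the span $V_{\widehat M}$ of the selected columns and $\widetilde\btheta\in V_M$, their difference lies in $V_{M\cup\widehat M}$, a space of dimension at most $|M|+|\widehat M|$; hence Cauchy--Schwarz gives $\frac1a\bxi^t(\widehat\btheta-\widetilde\btheta)\le\frac1a\|P_{M\cup\widehat M}\bxi\|\,\|\widehat\btheta-\widetilde\btheta\|$, with $P_{M\cup\widehat M}$ the Euclidean projection onto $V_{M\cup\widehat M}$. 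Using $\|\widehat\btheta-\widetilde\btheta\|^2\le \frac{4a}{\cl}\big(KL(\btheta,\widehat\btheta)+KL(\btheta,\widetilde\btheta)\big)$ together with Young's inequality $uv\le\frac{\rho}{2}v^2+\frac1{2\rho}u^2$, a fixed fraction of $KL(\btheta,\widehat\btheta)$ can be moved to the left-hand side, leaving a remainder proportional to $\frac1{\cl a}\|P_{M\cup\widehat M}\bxi\|^2$.

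The heart of the argument, and the step I expect to be the main obstacle, is the uniform control of $\|P_{M'}\bxi\|^2$ over \emph{all} candidate models simultaneously. I would first extract the tail of $\bxi$ from the exponential family itself: the log-moment generating function of $\xi_i=Y_i-b'(\theta_i)$ equals $\frac1a\{b(\theta_i+sa)-b(\theta_i)\}-sb'(\theta_i)=\frac12 b''(\cdot)\,a\,s^2\le\frac{\cu a}{2}s^2$ by Assumption~(A)(a), so each $\xi_i$ is sub-Gaussian with variance proxy $\cu a$. A chi-square-type deviation bound (in the spirit of Laurent--Massart) then gives, for a fixed $d$-dimensional coordinate subspace and any $x>0$, $\|P\bxi\|^2\le \cu a\,(d+2\sqrt{dx}+2x)$ with probability at least $1-e^{-x}$. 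Applying this with $d\le|M|+k$ and deviation level $x=kL_k$ to each model of size $k$, and taking a union bound over the $\binom{p}{k}$ such models, the summability condition (\ref{eq:weights}) controls the probability of the exceptional set over all sizes $k=1,\ldots,r$. On the complementary event the remainder splits into a piece scaling like $k\,(1+\sqrt{L_k}+L_k)$ attached to the selected size $|\widehat M|$ --- which is exactly what the lower bound (\ref{eq:penalty}) on $Pen(|\widehat M|)$ is built to dominate --- and a fixed piece attached to the oracle size $|M|$, absorbed into $Pen(|M|)$ on the right.

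It remains to pass from this pointwise (on the good event) inequality to the stated bound in expectation. I would integrate the deviation bound over $x$: the contribution of the exceptional event is handled by $\int_0^\infty$-type tail estimates that again invoke (\ref{eq:weights}), producing the additive constant $\frac{16}{3}\frac{\cu}{\cl}\frac{2A-1}{A-1}S$. The precise constants come out of the bookkeeping: the choice of the parameter $\rho$ in Young's inequality fixes both the factor $\frac43$ multiplying the oracle term and the $\frac{\cu}{\cl}$ multiplier, while the requirement $A>1$ guarantees a strictly positive surplus in $Pen(k)$ over the empirical-process bound, which keeps the series summable and yields the factor $\frac{2A-1}{A-1}$. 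Taking finally the infimum over $M\in\M$ and over $\widetilde\bbeta\in\cb_M$ on the right-hand side gives (\ref{eq:generalupper}).
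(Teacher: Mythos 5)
Your proposal is correct in its overall architecture and follows the same Birg\'e--Massart-type scheme as the paper's proof: the identical basic inequality from the penalized criterion, the Taylor-expansion equivalence $\frac{\cl}{2a}\|\btheta_1-\btheta_2\|^2\le KL(\btheta_1,\btheta_2)\le\frac{\cu}{2a}\|\btheta_1-\btheta_2\|^2$ (the paper's Lemma~\ref{lem:equiv}), sub-Gaussianity of $\bY-b'(\btheta)$ extracted from the exponential-family MGF, a chi-square-type deviation bound for the projected noise (the paper invokes Hsu--Kakade--Zhang), a union bound driven by (\ref{eq:weights}), and tail integration to produce the additive $S$-term. The one genuinely different step is your handling of the cross term. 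The paper splits $\bxi^t(\widehat{\btheta}_{\widehat M}-\btheta)$ into $\bxi^t(\widehat{\btheta}_{\widehat M}-\btheta_{\widehat M})$, bounded by $\frac{2}{a\cl}\|\bxi_{\widehat M}\|^2$ using the within-model MLE property $\ell(\widehat{\bbeta}_M)\ge\ell(\bbeta_M)$ together with the strong-convexity inequality $KL(\btheta,\widehat{\btheta}_M)-KL(\btheta,\btheta_M)\ge\frac{\cl}{2a}\|\widehat{\btheta}_M-\btheta_M\|^2$ (so only the span of the selected model enters), plus a \emph{linear} term $\bxi^t(\btheta_{\widehat M}-\btheta)$ whose fluctuation is paid for by the $\frac{1}{4}KL(\btheta,\btheta_{\widehat M})$ slack; the two dependent pieces are recombined at the MGF level via Cauchy--Schwarz and Birg\'e--Massart's deviation lemma. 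You instead project the whole difference onto the union span $V_{M\cup\widehat M}$ and absorb via Young's inequality. Your route is cleaner (no linear term, no MGF recombination), but it is lossier in the constants, and this matters for the theorem \emph{as stated}: (i) Young's inequality applied to $\|\widehat{\btheta}-\widetilde{\btheta}\|^2\le\frac{4a}{\cl}\bigl(KL(\btheta,\widehat{\btheta})+KL(\btheta,\widetilde{\btheta})\bigr)$ necessarily attaches the same fraction to both divergences, so the coefficient you end up with in front of the oracle term $KL(\btheta,X\widetilde{\bbeta})$ is strictly larger than the one in front of $Pen(|M|)$ --- you cannot land on $\frac{4}{3}$ for both as in (\ref{eq:generalupper}); (ii) your remainder is of order $\frac{4}{a\cl}\|P_{M\cup\widehat M}\bxi\|^2$ on a subspace of dimension up to $|M|+|\widehat M|$, i.e.\ twice the paper's $\frac{2}{a\cl}\|\bxi_{\widehat M}\|^2$ plus an extra $|M|$-dimensional piece, so the bookkeeping forces a penalty threshold roughly twice the right-hand side of (\ref{eq:penalty}). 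In short, your argument yields a theorem of the same form but under a stronger penalty condition and with larger leading constants; recovering the stated constants requires the paper's asymmetric decomposition that exploits $\widehat{\btheta}_M$ being the MLE within $\cb_M$.
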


The term $\inf_{\widetilde{\bbeta} \in \cb_M} KL(\btheta,X\widetilde{\bbeta})$ in
(\ref{eq:generalupper}) can be interpreted as a Kullback-Leibler divergence
between a true distribution $f_{\btheta}$ of the data and the family of
distributions $\{f_{X\widetilde{\bbeta}},\;\widetilde{\bbeta} \in \cb_M\}$
generated by the span of a subset of columns of $X$
corresponding to the model $M$. The binomial coefficients $\binom{p}{k}$
appearing in the
condition (\ref{eq:weights}) for $1\leq k<r$ are the numbers of all possible models of size $k$.
The case $k=r$
is treated slightly differently in (\ref{eq:weights}).
For $p=r$, there is evidently a single saturated model.
For $p>r$, although there are
$\binom{p}{r}$ various models of size $r$, all of them are nevertheless
undistinguishable in terms of $X\bbeta_M$ and can be still associated with a
single (saturated) model.

For Gaussian regression, $\mE KL(X\bbeta, X\widehat{\bbeta}_{\widehat M})=
\frac{1}{2\sigma^2}\mE||X\bbeta-X\widehat{\bbeta}_{\widehat M}||^2$,
$\min_{\widetilde{\bbeta} \in \cb_M} KL(X\bbeta,X\widetilde{\bbeta})=
\frac{1}{2\sigma^2}||X\bbeta-
X\bbeta_M||^2$, where $X\bbeta_M$ is the projection of $X\bbeta$ on the
span of columns of $M$, $\cl=\cu=1$ and the upper bound (\ref{eq:generalupper}) is similar (up to
somewhat different constants) to those of
Birg\'e \& Massart (2001, 2007).
Thus, Theorem \ref{th:generalupper} essentially  extends their results for GLM.

Consider two possible choices of weights $L_k$ and the corresponding penalties.

\noindent
{\em 1. Constant weights}. The simplest choice of the weights $L_k$'s is to take them equal, i.e. $L_k=L$ for all $k=1,\ldots,r$.
The condition (\ref{eq:weights}) implies then
$$
\sum_{k=1}^{r-1} \binom{p}{k} e^{-kL}+e^{-rL} \leq \sum_{k=1}^p \binom{p}{k} e^{-kL}=
(1+e^{-L})^p-1
$$
The above sum is bounded by an absolute constant for $L=\ln p$.
It can be easily verified that for $L=\ln p$ and $p \geq 3$,
there exists $A>1$ such that $A+2\sqrt{2L}+4L \leq 8 L$.
Thus,
$2~ \frac{\cu}{\cl}~ k (A+2\sqrt{2L}+4L)  \leq
16~ \frac{\cu}{\cl}~ k  \ln p$
that implies the RIC-type {\em linear} penalty
\be \label{eq:conpen}
Pen(k)=C~\frac{\cu}{\cl}~ k \ln p,\;\;\;k=1,\ldots,r
\ee
in Theorem \ref{th:generalupper} with $C \geq 16$.

Note that the AIC criterion corresponding
to $Pen(k)=k$ (see Section  \ref{subsec:penlik}) does not satisfy
(\ref{eq:penalty}).

\bigskip

\noindent
{\em 2. Variable weights.} Using the inequality $\binom{p}{k} \leq \left(\frac{pe}{k}\right)^k$
(see, e.g., Lemma A1 of Abramovich {\em et al.}, 2010), one has
\be \label{eq:varweights}
\sum_{k=1}^{r-1} \binom{p}{k} e^{-kL_k} + e^{-r L_r} \leq \sum_{k=1}^{r-1}
\left(\frac{pe}{k}\right)^k e^{-kL_k} + e^{-rL_r}= \sum_{k=1}^{r-1}
e^{-k(L_k-\ln(pe/k)}+e^{-rL_r}
\ee
that suggests the choice of $L_k \sim c \ln\left(\frac{pe}{k}\right),\;k=1,\ldots,r-1$ and $L_r=c$  for some $c>1$, and leads
to the {\em nonlinear} penalty of the form
$Pen(k) \sim C~\frac{\cu}{\cl}~  k \ln\left(\frac{pe}{k}\right)$ for $k=1,\ldots,r-1$ and
$Pen(r) \sim C~\frac{\cu}{\cl}~ r$ for
some constant $C$.

More precisely, for any $C>16$ there exist constants
$\widetilde{C}, A > 1$ such that $C \geq 16 A \tilde{C}$. Define $L_k=\widetilde{C}\ln\left(\frac{pe}{k}\right),\;k=1,\ldots,r-1$ and
$L_r=\widetilde{C}$. From (\ref{eq:varweights}) one can easily verify
the condition (\ref{eq:weights}) for such weights $L_k$.
Furthermore, for $1 \leq k \leq r-1$ we have
\be \nonumber
\begin{split}
2~\frac{\cu}{\cl} k (A+2\sqrt{2L_k}+4L_k)  < &~ 2 A~\frac{\cu}{\cl} k \left((1+\sqrt{2L_k})^2+2L_k\right)~ < ~ 2 A~ \frac{\cu}{\cl} k \left((1+\sqrt{2})^2 L_k+2L_k\right) \\
\leq & ~16 A~ \frac{\cu}{\cl} k L_k  
~\leq  ~ C~ \frac{\cu}{\cl} k \ln\left(\frac{pe}{k}\right)
\end{split}
\ee
and similarly, for $k=r$,
$$
2~\frac{\cu}{\cl}~ r (A+2\sqrt{2L_r}+4L_r)
\leq C~\frac{\cu}{\cl}~ r
$$
The corresponding (nonlinear) penalty in (\ref{eq:penalty}) is therefore
\be \label{eq:varpen}
Pen(k)=C~\frac{\cu}{\cl}~ k \ln \left(\frac{pe}{k}\right),\;k=1,\ldots,r-1 \;\;
{\rm and} \;\; Pen(r)= C~\frac{\cu}{\cl}~r,
\ee
where $C>16$. For Gaussian regression such $k\ln\frac{p}{k}$-type penalties were considered in
Birg\'e \& Massart (2001, 2007), Bunea {\em et al.} (2007), Abramovich \& Grinshtein (2010) and Rigollet
\& Tsybakov (2011).

The choice of $C > 16$ in (\ref{eq:conpen}) and 
(\ref{eq:varpen}) was mostly motivated by simplicity of calculus and it may possibly be reduced by more accurate analysis.

\subsection{Risk bounds for sparse models} \label{subsec:sparserisk}
Theorem \ref{th:generalupper} established a general upper bound for the
Kullback-Leibler risk without any assumption on the size of a true underlying model. 
Analysing large data sets
it is commonly assumed that only a subset of predictors has a real impact on
the response. Such extra {\em sparsity} assumption becomes especially
crucial for ``$p$ larger than $n$''
setups. We now show that for sparse models the upper bound (\ref{eq:generalupper}) can be improved.

For a given $1 \leq p_0 \leq r$, consider a set of models of size
at most $p_0$. Obviously, $|M| \leq p_0$
iff the $l_0$ (quasi)-norm of regression
coefficients $||\bbeta||_0 \leq p_0$, where
$||\bbeta||_0$ is the number of nonzero entries.
Define $\cb(p_0)=\{\bbeta \in \mathbb{R}^p: \bbeta^t \bx_i \in \Theta\;
{\rm for \; all}\;i=1,\ldots,n,\;{\rm and}\;||\bbeta||_0 \leq p_0\}$.

Consider a GLM with the canonical link $\btheta=X\bbeta$ under
Assumption (A), where $\bbeta \in \cb(p_0)$.
We refine the general upper bound (\ref{eq:generalupper})
for a penalized maximum likelihood estimator (\ref{eq:penlik})
with a RIC-type linear penalty (\ref{eq:conpen}) and
a nonlinear $k\ln\frac{p}{k}$-type penalty (\ref{eq:varpen}) considered in
Section \ref{subsec:generalupper} for sparse models with $\bbeta \in \cb(p_0)$.

Apply the general upper bound (\ref{eq:generalupper}) with  $A$
corresponding to the chosen constant $C$ in (\ref{eq:conpen}) and (\ref{eq:varpen}) (see
Section \ref{subsec:generalupper}), and the true
$\widetilde{\btheta}=X\widetilde{\bbeta},\;\widetilde{\bbeta} \in \cb(p_0)$ in the RHS.
For both penalties, we then have
\be
\label{eq:upper0} \sup_{\bbeta \in \cb(p_0)} \mE
KL(X\bbeta,X\widehat{\bbeta}_{\widehat M}) \leq 
\frac{4}{3}~ Pen(p_0)+ \frac{16}{3}~\frac{\cu}{\cl}~ \frac{2A-1}{A-1}~ S
\leq C_1 Pen(p_0)
\ee
for some constant $C_1>4/3$ not depending on $p_0$, $p$ and $n$.

Thus, for the RIC-type penalty (\ref{eq:conpen}), (\ref{eq:upper0})
yields $\sup_{\bbeta \in \cb(p_0)} \mE KL(X\bbeta,X\widehat{\bbeta}_{\widehat M})
=O(p_0\ln p)$, while for the nonlinear $k\ln\frac{p}{k}$-type penalty (\ref{eq:varpen})
the Kullback-Leibler risk is of a smaller order
$O\left(p_0 \ln(\frac{pe}{p_0})\right)$.
Moreover, the latter can be improved further for dense models, where
$p_0 \sim r$. Indeed, for a saturated model of size $r$ in the RHS of (\ref{eq:generalupper}), the penalty (\ref{eq:varpen}) yields
\be \label{eq:satmodel}
\sup_{\bbeta \in \cb(p_0)} \mE KL(X\bbeta,X\widehat{\bbeta}_{\widehat M})
\leq \sup_{\bbeta \in \cb(r)} \mE KL(X\bbeta,X\widehat{\bbeta}_{\widehat M})
\leq C_1 Pen(r) = O(r)
\ee
and the final upper bound for an estimator with the penalty (\ref{eq:varpen}) 
is, therefore,
\be \label{eq:upper}
C_1~ \frac{\cu}{\cl}~ \min\left(p_0 \ln\frac{pe}{p_0},r\right)
\ee
with $C_1>4/3$.

To assess the accuracy of the upper bound (\ref{eq:upper})
we establish the corresponding lower bound for the
minimax Kullback-Leibler risk over $\cb(p_0)$.

We introduce first some additional notation.
For any given $k=1,\ldots,r$, let $\phi_{min}[k]$ and $\phi_{max}[k]$
be the $k$-sparse minimal and maximal eigenvalues of the design defined as
$$
\phi_{min}[k]=\min_{\bbeta: 1 \leq ||\bbeta||_0 \leq k}
\frac{||X\bbeta||^2}{||\bbeta||^2},
$$
$$
\phi_{max}[k]=\max_{\bbeta: 1 \leq ||\bbeta||_0 \leq k}
\frac{||X\bbeta||^2}{||\bbeta||^2}
$$
In other words, $\phi_{min}[k]$ and $\phi_{max}[k]$ are respectively
the minimal and maximal eigenvalues of all $k \times k$ submatrices
of the matrix $X^tX$ generated by any $k$ columns of $X$.
Let $\tau[k]=\phi_{min}[k]/\phi_{max}[k],\;k=1,\ldots,r$.

\begin{theorem} \label{th:lower}
Consider a GLM with the canonical link $\btheta=X\bbeta$
under Assumptions (A).

Let $1 \leq p_0 \leq r$ and assume that
$\widetilde{\cb}(p_0) \subseteq \cb(p_0)$, where
the subsets $\widetilde{\cb}(p_0)$ are defined in the proof.
Then, there exists a constant $C_2>0$ such that
\be
\label{eq:lower}
\inf_{\widehat{\btheta}} \sup_{\bbeta \in \cb(p_0)}
\mE KL(X\bbeta,\widehat{\theta}) \geq
\left\{
\begin{array}{ll}
C_2~\frac{\cl}{\cu}~
\tau[2p_0]\; p_0 \ln\left(\frac{pe}{p_0}\right),&\; 1 \leq p_0 \leq r/2  \\
C_2~\frac{\cl}{\cu}~  \tau[p_0]\; r, &\; r/2 \leq p_0 \leq r
\end{array}
\right.
\end{equation}
where the infimum is taken over all estimators $\hat{\btheta}$ of $\btheta$.
\end{theorem}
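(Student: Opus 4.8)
The plan is to reduce the estimation problem to a multiple testing problem and apply Fano's inequality. The engine of the argument is the two-sided comparison between the Kullback--Leibler loss and a squared Euclidean distance that follows from Assumption (A): a second-order Taylor expansion of $b$ gives $KL(X\bbeta,X\bzeta)=\frac{1}{2a}\sum_{i=1}^n b''(\xi_i)\,((X\bbeta)_i-(X\bzeta)_i)^2$ for intermediate points $\xi_i$, so that $\frac{\cl}{2a}\|X\bbeta-X\bzeta\|^2\le KL(X\bbeta,X\bzeta)\le \frac{\cu}{2a}\|X\bbeta-X\bzeta\|^2$. Combining this with the sparse eigenvalue inequalities $\phi_{min}[k]\|\bbeta-\bzeta\|^2\le\|X(\bbeta-\bzeta)\|^2\le\phi_{max}[k]\|\bbeta-\bzeta\|^2$, valid whenever $\bbeta-\bzeta$ is $k$-sparse, I can pass freely between the Kullback--Leibler divergence (which governs the distinguishability of hypotheses) and the parameter norm $\|\bbeta-\bzeta\|$ (in which the separation of hypotheses is easiest to control). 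Before applying the sandwich to the loss $KL(X\bbeta,\widehat{\btheta})$, I would note that since $KL(X\bbeta,\cdot)$ is minimized coordinatewise at $(X\bbeta)_i\in\Theta$ and increases away from it, restricting to estimators with values in $\Theta^n$ is without loss of generality, so the lower bound $b''\ge\cl$ on $\Theta$ then applies.

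Next I would construct the finite subset $\widetilde{\cb}(p_0)\subseteq\cb(p_0)$. For the sparse regime $1\le p_0\le r/2$, a Varshamov--Gilbert / combinatorial pruning of the $p_0$-subsets of $\{1,\dots,p\}$ yields binary patterns $\omega_1,\dots,\omega_N\in\{0,1\}^p$, each of Hamming weight $p_0$, with pairwise Hamming distance at least $p_0/2$ and $\log N\gtrsim p_0\ln(pe/p_0)$. Setting $\bbeta_j=\rho\,\omega_j$ for a scale $\rho>0$ to be fixed, each $\bbeta_j$ is $p_0$-sparse and, for $\rho$ small enough, satisfies $\bbeta_j^t\bx_i\in\Theta$, hence lies in $\cb(p_0)$; moreover $\frac{p_0}{2}\rho^2\le\|\bbeta_j-\bbeta_{j'}\|^2\le 2p_0\rho^2$, with differences $2p_0$-sparse. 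For the dense regime $r/2\le p_0\le r$ I would instead place the patterns on a single fixed support of size $p_0\asymp r$ and use a Varshamov--Gilbert bound on the cube $\{0,1\}^{p_0}$, giving $\log N\gtrsim p_0\asymp r$; the remaining computation is identical.

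With this packing in hand I would apply Fano's inequality in Tsybakov's form. Taking $\bbeta_0=0$ as the reference, the upper sandwich gives $KL(f_{X\bbeta_j},f_{X\bbeta_0})\le\frac{\cu\,\phi_{max}[2p_0]}{2a}\,p_0\rho^2$, so choosing $\rho^2\asymp \frac{a\,\ln(pe/p_0)}{\cu\,\phi_{max}[2p_0]}$ makes the average divergence at most $\alpha\log N$ for a fixed small $\alpha$, the condition Fano requires. The lower sandwich then bounds the separation of the hypotheses in squared Euclidean norm by $\frac{p_0}{2}\phi_{min}[2p_0]\rho^2\asymp\frac{a}{\cu}\tau[2p_0]\,p_0\ln(pe/p_0)$, and Fano turns this into a lower bound of the same order on $\inf_{\widehat{\btheta}}\sup_j\mE\|X\bbeta_j-\widehat{\btheta}\|^2$. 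A final application of $KL\ge\frac{\cl}{2a}\|\cdot\|^2$ converts this into the stated Kullback--Leibler bound; the factor $a$ cancels and the ratio $\tau[2p_0]=\phi_{min}[2p_0]/\phi_{max}[2p_0]$ appears precisely because distinguishability is controlled through $\phi_{max}$ while separation is controlled through $\phi_{min}$. The dense regime produces $C_2\frac{\cl}{\cu}\tau[p_0]\,r$ in the same way.

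The main obstacle is the careful calibration of the single free scale $\rho$: it must be small enough that all hypotheses stay inside the constraint set $\cb(p_0)$ (the $\Theta$ restriction) and that the average Kullback--Leibler divergence stays below the Fano threshold $\alpha\log N$, yet large enough that the induced separation in the loss attains the target rate. Getting the two regimes to meet at $p_0\asymp r$ and ensuring that the packing cardinality satisfies $\log N\gtrsim p_0\ln(pe/p_0)$ (rather than merely $p_0$) are the points where the combinatorial lemma and the eigenvalue bookkeeping must be handled with care; the asymmetry and lack of a triangle inequality for $KL$ are circumvented entirely by performing the separation argument in the Euclidean norm and translating only at the end.
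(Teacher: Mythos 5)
Your proposal is correct and follows essentially the same route as the paper: reduce the KL risk to a quadratic risk via the two-sided Taylor sandwich (the paper's Lemma \ref{lem:equiv}), build a sparse packing via combinatorial pruning of $p_0$-subsets (Rigollet--Tsybakov) in the regime $p_0\le r/2$ and a Varshamov--Gilbert packing on a fixed support in the dense regime, calibrate the common scale against $\phi_{max}$ so the pairwise KL stays below the Fano threshold, and read off the separation through $\phi_{min}$, which is exactly how the factor $\tau$ arises in the paper. The only substantive difference is that you try to resolve the tension between the Fano calibration of $\rho$ and the constraint $\bbeta_j^t\bx_i\in\Theta$ by taking $\rho$ ``small enough,'' whereas the paper sidesteps this by simply assuming $\widetilde{\cb}(p_0)\subseteq\cb(p_0)$ in the statement of the theorem (and it also treats the tiny-$r$ case, where Varshamov--Gilbert is unavailable, by a two-point argument).
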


\subsection{Asymptotic adaptive minimaxity} \label{subsec:minmax}
We consider now the asymptotic properties of the proposed penalized MLE estimator as
the sample size $n$ increases. The number of predictors $p=p_n$ may increase
with
$n$ as well, where we allow $p>n$ or even $p \gg n$. In such asymptotic setup
there is essentially a {\em sequence} of design matrices $X_{n,p_n}$, where
$r_n=rank(X_{n,p_n})$. For simplicity of notation, in what
follows we omit the index $n$ and denote $X_{n,p_n}$ by $X_p$ to highlight the
dependence on $p$, and let $r$ tend to infinity. Similarly,
we define the corresponding sequences of regression coefficients
$\bbeta_p$ and sets $\cb_p(p_0)$.
The considered asymptotic GLM setup can now be viewed as a sequence of GLM
models of the form $Y_i \sim f_{\theta_i}(y),\;i=1,\ldots,n$, where
$f_{\theta_i}(y)$ are given in (\ref{eq:model}), $\theta_i \in \Theta$,
$\btheta=X_p \bbeta_p$ and $rank(X_p)=r \rightarrow \infty$.

As before, we assume that any $r$ columns of $X_p$ are linearly
independent and, therefore, $\tau_p[r]>0$. We distinguish between
two possible cases: {\em weakly collinear} design, where the
sequence $\tau_p[r]$ is bounded away from zero by some constant
$c>0$, and {\em multicollinear} design, where $\tau_p[r] \rightarrow
0$. Intuitively, it is clear that weak collinearity of the design
cannot hold when $p$ is ``too large'' relative to $r$. Indeed,
Abramovich \& Grinshtein (2010, Remark 1) showed that for weakly
collinear design, necessarily $p=O(r)$ and, thus, $p=O(n)$.

For weakly collinear design the following corollary is an immediate
consequence of (\ref{eq:upper}) and Theorem \ref{th:lower}:
\begin{corollary} \label{cor:minimax}
Consider a GLM  with the
canonical link and weakly collinear design.
Then, as $r$ increases, under Assumption (A) and other assumptions of Theorem \ref{th:lower} the
following statements hold~:
\begin{enumerate}
\item The asymptotic minimax  ullback-Leibler
risk from the true model
over $\cb_p(p_0)$ is of the order $\min\left(p_0 \ln\left(\frac{pe}{p_0}\right),r\right)$ or
essentially $p_0\ln\left(\frac{pe}{p_0}\right)$ (since $p=O(r)$ -- see comments above),
that is, there exist two constants $0< C_1 \leq C_2 < \infty$ depending
possibly on the ratio $\frac{\cu}{\cl}$ such that for
all sufficiently large $r$,
$$
C_1~ p_0 \ln\left(\frac{pe}{p_0}\right)  \leq
\inf_{\widehat{\btheta}}\sup_{\bbeta_p \in \cb_p(p_0)}
\mE KL(X_p \bbeta_p,\widehat{\btheta})
\leq C_2~ p_0 \ln\left(\frac{pe}{p_0}\right)
$$
for all $1 \leq p_0 \leq r$.

\item Consider penalized maximum likelihood model selection rule (\ref{eq:penlik})
with the complexity penalty $Pen(k)= C~\frac{\cu}{\cl}~ k\ln\left(\frac{pe}{k}\right),\;
k=1,\ldots,r-1$ and $Pen(r)=C~\frac{\cu}{\cl}~ r,$ where $C>16$.
Then, the resulting penalized MLE estimator $X_p\widehat{\bbeta}_{p\widehat M}$
attains the minimax convergence rates (in terms
of $\mE KL(X_p\bbeta_p,X_p\widehat{\bbeta}_{p\widehat M})$)
simultaneously over all $\cb_p(p_0),\;1 \leq p_0 \leq r$.
\end{enumerate}
\end{corollary}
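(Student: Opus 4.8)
The plan is to derive both statements directly from the two-sided bounds already available: the upper bound (\ref{eq:upper}) for the penalized MLE with the nonlinear penalty (\ref{eq:varpen}), and the minimax lower bound (\ref{eq:lower}) of Theorem \ref{th:lower}. The only real work is to verify that, under the weakly collinear assumption together with $p=O(r)$, these two bounds coincide up to a multiplicative constant depending on $\cu/\cl$, and then to observe that the upper bound is attained by a single estimator not depending on $p_0$.

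First I would record the monotonicity of the sparse eigenvalues in the sparsity level. Since $\phi_{min}[k]$ is non-increasing and $\phi_{max}[k]$ is non-decreasing in $k$, the ratio $\tau[k]=\phi_{min}[k]/\phi_{max}[k]$ is non-increasing, so $\tau[k]\ge\tau[r]$ for every $1\le k\le r$. For weakly collinear design $\tau_p[r]\ge c>0$, hence $\tau[2p_0]\ge c$ when $2p_0\le r$ and $\tau[p_0]\ge c$ always. Substituting into (\ref{eq:lower}) replaces both $\tau[\cdot]$ factors by the constant $c$, giving a lower bound of order $\frac{\cl}{\cu}\,p_0\ln(pe/p_0)$ on the sparse regime $1\le p_0\le r/2$ and of order $\frac{\cl}{\cu}\,r$ on the dense regime $r/2\le p_0\le r$.

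Next I would reconcile the two regimes using $p=O(r)$. Writing $p\le Kr$, on the dense regime $r/2\le p_0\le r$ one has $e\le pe/p_0\le 2Ke$, so $\ln(pe/p_0)$ is bounded between $1$ and $\ln(2Ke)$; consequently $p_0\ln(pe/p_0)\asymp r$ there, and the dense-regime lower bound of order $r$ is the same (up to constants) as $p_0\ln(pe/p_0)$. On the sparse regime $p=O(r)$ also forces $p_0\ln(pe/p_0)=O(r)$ uniformly, so $\min(p_0\ln(pe/p_0),r)=p_0\ln(pe/p_0)$ up to a constant throughout $1\le p_0\le r$. Hence the upper bound (\ref{eq:upper}) reads $O\!\left(\frac{\cu}{\cl}\,p_0\ln(pe/p_0)\right)$ for all $p_0$, and together with the lower bound just derived this yields the two-sided inequality of Part 1 with constants $0<C_1\le C_2<\infty$ depending only on $\cu/\cl$ (and on $c$, $K$).

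Finally, for Part 2 I would emphasize the key structural point: the penalty (\ref{eq:varpen}) entering the selection rule (\ref{eq:penlik}) does not depend on the unknown sparsity $p_0$, so the penalized MLE $X_p\widehat{\bbeta}_{p\widehat M}$ is a single estimator for which (\ref{eq:upper}) holds simultaneously for every $1\le p_0\le r$. Combined with the matching lower bound of Part 1, this estimator attains the minimax rate over each $\cb_p(p_0)$ at once, i.e.\ it is adaptively minimax. I expect no genuine obstacle here; the only step requiring care is the regime reconciliation, where one must use $p=O(r)$ to collapse the $\min(\cdot,r)$ and the two cases of (\ref{eq:lower}) into the single rate $p_0\ln(pe/p_0)$ --- a point that fails for general designs and is precisely what weak collinearity buys.
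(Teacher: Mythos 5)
Your proposal is correct and follows essentially the same route as the paper, which presents the corollary as an immediate consequence of the upper bound (\ref{eq:upper}) and Theorem \ref{th:lower}; you simply make explicit the details the authors leave implicit (monotonicity of $\tau[\cdot]$ so that weak collinearity bounds $\tau[2p_0]$ and $\tau[p_0]$ below by $c$, and the use of $p=O(r)$ to collapse the $\min(\cdot,r)$ and the two regimes of (\ref{eq:lower}) into the single rate $p_0\ln(pe/p_0)$). The adaptivity observation in Part 2 --- that the penalty (\ref{eq:varpen}) does not involve $p_0$, so one estimator attains the rate simultaneously over all $\cb_p(p_0)$ --- is exactly the intended argument.
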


Corollary \ref{cor:minimax} is a generalization of the corresponding results
of Abramovich \& Grinshtein (2010) for Gaussian regression. It also shows
that model selection criteria with RIC-type (linear) penalties (\ref{eq:conpen})
of the form
$Pen(k)=C k \ln p$ are of the minimax order for sparse models with $p_0 \ll p$
but only suboptimal otherwise.

We would like to finish this section with several important remarks:

\medskip

\noindent {\bf Remark 1.} Under Assumption (A), $KL(\btheta,\bzeta)
\asymp ||\btheta-\bzeta||^2$ (see Lemma \ref{lem:equiv} in the Appendix) and
Corollary \ref{cor:minimax} implies then that
$X_p\widehat{\bbeta}_{p\widehat{M}}$ is also a minimax-rate estimator
for natural parameters $\btheta=X_p\bbeta_p$ in terms of the quadratic risk
simultaneously over all $\cb_p(p_0),\;p_0=1,\ldots,r$.
Furthermore, since
$||X_p\widehat{\bbeta}_{p\widehat{M}}-X_p\bbeta_p||^2 \asymp
||\widehat{\bbeta}_{p\widehat{M}}- \bbeta_p||^2$ for weakly
collinear design, the same is true for
$\widehat{\bbeta}_{p\widehat{M}}$ as an estimator of the regression
coefficients $\bbeta_p \in \cb_p(p_0)$.

\medskip
\noindent {\bf Remark 2.} As we have mentioned, multicollinear
design necessarily appears when $p \gg n$. For such type of design,
$\tau_p[r]$ tends to zero, and there is a gap in the rates in the
upper and lower bounds (\ref{eq:upper}) and (\ref{eq:lower}).
Somewhat surprisingly, multicollinearity, being a ``curse'' for
consistency of variable selection or estimation of regression coefficients
$\bbeta$, may be a ``blessing'' for
estimating $\btheta=X\bbeta$.
For Gaussian regression Abramovich \&
Grinshtein (2010) showed that strong correlations between predictors can be
exploited to reduce the size of a model (thus, to decrease the
variance) without paying much extra price in the bias and, therefore, to
improve the upper bound (\ref{eq:upper}). The analysis
of multicollinear case is however much more delicate and technical
even for the linear regression (see Abramovich \& Grinshtein, 2010), and we do not
discuss its extension for GLM in this paper.

\medskip
\noindent {\bf Remark 3.}
Like any model selection criteria based on complexity penalties, minimization in (\ref{eq:penlik}) is a nonconvex optimization problem that generally
requires search over all possible models. To make computations practically feasible
for high-dimensional data, common approaches are either various greedy algorithms
(e.g., forward selection) that approximate the global solution of (\ref{eq:penlik}) by
a stepwise sequence of local ones, or convex relaxation methods, where the
original nonconvex problem is replaced by a related convex program. The most well-known and 
well-studied method is the celebrated Lasso (Tibshirani, 1996). For {\em linear}
complexity penalties of the form $Pen(|M|)=C |M|$  it replaces the original $l_0$-norm $|M|=||\widehat{\bbeta}_M||_0$ in (\ref{eq:penlik}) by the $l_1$-norm $||\widehat{\bbeta}_M||_1$. Theoretical properties of Lasso for
Gaussian regression have been intensively studied in the literature during the last
decade (see, e.g., Bickel, Ritov \& Tsybakov, 2009). Van de Geer (2008) investigated
Lasso in the GLM setup but with random design. In particular, she showed that under assumptions similar to Assumption (A) and some additional restrictions on the design, Lasso with a properly chosen tuning parameter $C$
behaves similar to the RIC-type estimator and its Kullback-Leibler risk achieves the sub-optimal rate $O(p_0 \ln p)$.

\subsection{Example: logistic regression} \label{subsec:logist}
We now illustrate the obtained general results on logistic regression. 

Consider the Bernoulli distribution $Bin(1,p)$. A simple calculus shows that it belongs to the natural exponential family with the natural parameter $\theta=\ln \frac{p}{1-p},\;b(\theta)=\ln(1+e^{\theta})$ and $a=1$. Thus, $b''(\theta)=\frac{e^{\theta}}{(1+e^{\theta})^2} \leq 1/4$ and, as we have already mentioned in Section \ref{subsec:setup}, the condition (a) of Assumption (A) is satisfied with $\cu=1/4$ for any $\Theta$, while the condition (b)
is equivalent to the boundedness of $\theta$:
$\Theta=\{\theta: |\theta| \leq C_0\}$, where $\cl=\frac{e^{C_0}}{(1+e^{C_0})^2}$. 
In terms of the original parameter of the binomial distribution $p=\frac{e^\theta}{1+e^\theta}$ it means that $p$ is bounded away from zero and one:
$\delta \leq p \leq 1-\delta$ for some $0 < \delta < 1/2$ and $\cl=\delta(1-\delta)$ . 
The same restriction on $p$ appears in van de Geer (2008).

Consider now a logistic regression, where a binary data $Y_i \sim Bin(1,p_i),\; 
\bx_i \in \mathbb{R}^p$ are deterministic and 
$\ln \frac{p_i}{1-p_i}=\bbeta^t \bx_i,\;i=1,\ldots,n$. Following (\ref{eq:mle}), for a given model $M$, the MLE of
$\bbeta$ is
\be \label{eq:binlik}
\widehat{\bbeta}_M=\arg \max_{\widetilde{\bbeta} \in \cb_M}
\sum_{i=1}^n\left \{\bx_i^t \widetilde{\bbeta}_M Y_i -  \ln\left(1+\exp(\bx_i^t \widetilde{\bbeta}_M)\right)\right\},
\ee
where $\cb_M$ was defined in (\ref{eq:mle}).
The MLE for the resulting probabilities $p_{Mi}$'s are $\widehat{p}_{Mi}=
\frac{\exp(\widehat{\bbeta}_M \bx_i)}{1+\exp(\widehat{\bbeta}_M \bx_i)},\;i=1,\ldots,n$.

The model $\widehat{M}$ is selected w.r.t. the penalized maximum likelihood model selection criterion 
(\ref{eq:penlik}):
\be \label{eq:binpen}
\widehat{M}=\arg \min_{M \in \M} 
\left\{\sum_{i=1}^n \left( \ln\left(1+\exp(\bx_i^t \widehat{\bbeta}_M)\right)-
\bx_i^t \widehat{\bbeta}_M Y_i \right) + Pen(|M|) \right\}
\ee

A straightforward calculus shows that the Kullback-Leibler divergence $KL({\bf p}_1,{\bf p}_2)$ between two sample distributions from $Bin(1,p_{1i})$ and
$Bin(1,p_{2i}),\;i=1,\ldots,n$ is
$$
KL({\bf p}_1,{\bf p}_2)=\sum_{i=1}^n \left\{p_{1i} \ln \left(\frac{p_{1i}}{p_{2i}}\right)+
(1-p_{1i}) \ln \left(\frac{1-p_{1i}}{1-p_{2i}}\right)\right\}
$$

Assume that there exists a constant $C_0 < \infty$ such that $\max_{1 \leq i \leq n} |\bbeta^t \bx_i| \leq C_0$ or, equivalently, $\delta \leq p_i \leq 1-\delta,\;i=1,\ldots,n$
for some positive $\delta < 1/2$ (see above). 
Assumption (A) is, therefore, satisfied 
with $\cu=1/4$ and $\cl=\delta(1-\delta)$. 

Consider the $k\ln\frac{p}{k}$-type complexity penalty (\ref{eq:varpen})  
$Pen(k)=C k\ln \frac{pe}{k}$ for $k=1,\ldots,r-1$ and $Pen(r)=C r$ in (\ref{eq:binpen}), where
$C>\frac{4}{\delta(1-\delta)}$.
From our general results from the previous sections
it then follows that
$$ EKL({\bf p},\widehat{\bf p}_{\widehat M})=O\left(\min\left(p_0 \ln\frac{pe}{p_0},r\right)\right),
$$
where $p_0=||\bbeta||_0$ is the size of the true (unknown) underlying logistic regression model. 
For weakly collinear design,
as $r$ increases, it is the minimax rate of convergence.

Similarly, the RIC-type penalty $Pen(k)=C  k \ln p,\;k=1,\ldots,r$  with $C>\frac{4}{\delta(1-\delta)}$ in (\ref{eq:binpen}) yields the sub-optimal rate
$O\left(p_0 \ln p\right)$.

\section{Possible extensions} \label{sec:extensions}
In this section we discuss some possible extensions of the results obtained in
Section \ref{sec:main}.

\subsection{Model selection in GLM under structural constraints} \label{subsec:struct}
So far we considered the complete variable selection, where the set
of admissible models $\M$ contains all $2^p$ possible subsets of
predictors $x_1,\ldots,x_p$. However, in various GLM setups there may be
additional structural constraints on the set of admissible
models. Thus, for the ordered variable selection, where the
predictors have some natural order, $x_j$ can enter the model only
after $x_1,\ldots,x_{j-1}$ (e.g., polynomial regression). Models
with interactions that cannot be selected without the
corresponding main effects is an example of hierarchical
constraints. Factor predictors associated with groups of indicator
(dummy) variables, where either none or all of the group is
selected, is an example of group structural constraints.

Model selection under structural constraints for Gaussian regression was
considered in Abramovich \& Grinshtein (2013). Its extension to GLM may be
described as follows. Let $m(p_0)$ be the number of all admissible models of
size $p_0$.
As before we can consider only $1 \leq p_0 \leq r$, where $m(r)=1$ if there
are admissible models of size $r$.
Obviously, $0 \leq m(p_0) \leq {p \choose p_0}$, where the two extreme cases $m(p_0)=1$ and $m(p_0)={p \choose
p_0}$ for all $p_0=1,\ldots,r-1$, correspond respectively to the
ordered and complete variable selection.

Let $\M$ be the set of all admissible models. We slightly change the original
definition of $\cb_M$ in (\ref{eq:mle}) by the additional requirement that
$\beta_j = 0$ iff $j \notin M$ to have $||\bbeta||_0=|M|$
for all $\bbeta \in \cb_M$. The model $\widehat{M}$ is
selected w.r.t. (\ref{eq:penlik}) from all models in $\M$ and
the penalty $Pen(k)$ is relevant only for $k$ with $m(k) \geq 1$.
From the proof (see the Appendix)
it follows that Theorem \ref{th:generalupper} can be
immediately extended to a restricted set of models $\M$
with an obviously modified condition (\ref{eq:weights}) on the weights $L_k$.
Namely, let
\be \label{eq:weightsconst}
\sum_{k=1}^{r-1}m(k)e^{-kL_k}+e^{-rL_k} \leq S
\ee
and 
$$
Pen(k) \geq 2~\frac{\cu}{\cl}~ k (A+2\sqrt{2L_k}+4L_k),\;\;\;k=1,\ldots,r;\;m(k) \geq 1
$$
for some $A>1$.
Then, under Assumption (A)
\be \label{eq:generalupperconst}
\mE KL(\btheta, X\widehat{\bbeta}_{\widehat M})
\leq \frac{4}{3}~ \inf_{M \in \M} \left\{\inf_{\widetilde{\bbeta} \in \cb_M} KL(\btheta,X\widetilde{\bbeta})+Pen(|M|)\right\}+
\frac{16}{3}~\frac{\cu}{\cl}~ \frac{2A-1}{A-1}~ S,
\ee
See Birg\'e \& Massart (2001, 2007), Abramovich \& Grinshtein (2013) for
similar results for Gaussian regression under structural constraints.

In particular, (\ref{eq:weightsconst}) holds for
$L_k= c \frac{1}{k} \max(\ln m(k),k),\;k=1,\ldots,r;\;m(k) \geq 1$
for some $c>1$ leading to the penalty of the form
\be \label{eq:varpenconst}
Pen(k) \sim \frac{\cu}{\cl} \max(\ln m(k),k)
\ee
for all $1 \leq k \leq r$ such that $m(k) \geq 1$.
For the complete variable selection, the penalty (\ref{eq:varpenconst}) is
evidently
the $k\ln\frac{p}{k}$-type penalty (\ref{eq:varpen}) from Section
$\ref{sec:main}$, while for the ordered variable selection it implies the
AIC-type penalty of the form $Pen(k) = C \frac{\cu}{\cl}~k$ for some $C>0$.

Consider now all admissible models of size $p_0$ and the corresponding
set of regression coefficients
$\cb(p_0)=\bigcup_{M \in \M: |M|=p_0}\cb_M$. Repeating the arguments from
Section \ref{subsec:sparserisk}, for the complexity penalty (\ref{eq:varpenconst}), under Assumption (A),
the general upper bound (\ref{eq:generalupperconst}) yields
\be \label{eq:upper0const}
\sup_{\bbeta \in \cb(p_0)} \mE KL(X\bbeta,X\widehat{\bbeta}_{\widehat M})
=O\left(Pen(p_0)\right)=O\left(\max(\ln m(p_0),p_0)\right)
\ee
with a constant depending on the ratio $\cu/\cl$.

The upper bound (\ref{eq:upper0const}) can be improved further if there exist
admissible models of size $r$. In this case $m(r)=1$ and similar to 
(\ref{eq:satmodel}) for complete variable selection, we have
$$
\sup_{\bbeta \in \cb(p_0)} \mE KL(X\bbeta,X\widehat{\bbeta}_{\widehat M})
=O(Pen(r))=O(r)
$$
that combining with (\ref{eq:upper0const}) yields
\be \label{eq:upper0const1}
\sup_{\bbeta \in \cb(p_0)} \mE KL(X\bbeta,X\widehat{\bbeta}_{\widehat M})=
O\left(\min\left(\max(\ln m(p_0),p_0),r\right)\right)
\ee

In the supplementary material we show that if $m(p_0) \geq 1$, under
Assumption (A) and correspondingly modified other assumptions of
Theorem 2, the minimax lower bound over $\cb(p_0)$ is \be
\label{eq:lowerconst} \inf_{\widetilde{\btheta}} \sup_{\bbeta \in
\cb(p_0)} \mE KL(X\bbeta,\widetilde{\theta}) \geq \left\{
\begin{array}{ll}
C_2~\frac{\cl}{\cu}~
\max\left\{\tau[2p_0]\frac{\ln m(p_0) }{\ln p_0},\tau[p_0]p_0\right\},&
\; 1 \leq p_0 \leq r/2  \\
C_2~\frac{\cl}{\cu}~ \tau[p_0]\; r,  &\; r/2 \leq p_0 \leq r
\end{array}
\right.
\ee
for some $C_2>0$.

Thus, comparing the upper bounds (\ref{eq:upper0const})--(\ref{eq:upper0const1})
with the lower bound (\ref{eq:lowerconst}) one realizes that
for weakly collinear design the proposed penalized maximum likelihood estimator
with the complexity penalty of type (\ref{eq:varpenconst})
is asymptotically (as $r$ increases) at least nearly-minimax (up to
a possible $\ln p_0$-factor) simultaneously for all $1 \leq p_0 \leq
r/2$ and for all $1 \leq p_0 \leq r$ if, in addition, $m(r)=1$
(i.e., there exist admissible models of size $r$).
In particular, for the ordered variable selection, both bounds
are of the same order $O(p_0)$. In Section \ref{sec:main} we showed that it also
achieves the exact minimax rate for complete variable selection. So
far we can only conjecture that the $\ln p_0$-factor can be removed
in (\ref{eq:lowerconst}) for a general case as well.
See also Abramovich \& Grinshtein (2013) for similar results for Gaussian
regression.

\subsection{Aggregation in GLM} \label{subsec:aggregation}
An interesting statistical problem related to model selection is aggregation.
Originated by Nemirovski (2000), it has been intensively studied in the
literature during the last decade. See, for example, Tsybakov (2003),
Young (2004),  Leung \& Barron (2006), Bunea {\em et al.} (2007) and
Rigollet \& Tsybakov (2011) for aggregation in Gaussian
regression. Aggregation in GLM  was
considered in Rigollet (2012) and can be described as follows.

We observe $(\bx_i,Y_i),\;i=1,\ldots,n$, where the distribution
$f_{\theta_i}(\cdot)$ of $Y_i$ belongs to the exponential family
with a natural parameter $\theta_i$ (\ref{eq:model}). Unlike
GLM regression with the canonical link, where we assume that
$\theta_i=\bbeta^t \bx_i$, in aggregation setup we do not rely on
such modeling assumption but simply seek the best linear approximation
$\btheta_{\bbeta}=\sum_{j=1}^p \beta_j \bx_j$ of $\btheta$ w.r.t.
Kullback-Leibler divergence, where $\bbeta \in \cb \subseteq
\mathbb{R}^p$, by solving the following optimization problem:
\be
\label{eq:aggreg} \inf_{\bbeta \in \cb}KL(\btheta,\btheta_{\bbeta})
\ee
Depending on the specific choice of $\cb \subseteq \mathbb{R}^p$
there are different aggregation
strategies. Following the terminology of Bunea {\em et al.} (2007)
there are {\em linear} aggregation
($\cb=\cb_L=\mathbb{R}^p$), {\em convex} aggregation
($\cb=\cb_C=\{\bbeta \in \mathbb{R}^p: \bbeta_j \geq 0,\;
\sum_{j=1}^p \beta_j=1\}$), {\em model selection} aggregation
($\cb=\cb_{MS}$ is a subset of vectors with a single nonzero entry),
and {\em subset selection} or {\em
$p_0$-sparse} aggregation ($\cb=\cb_{SS}(p_0)=\{\bbeta \in
\mathbb{R}^p: ||\bbeta||_0 \leq p_0\}$ for a given $1 \leq p_0 \leq
r$). In fact, linear and model selection aggregation can be viewed
as two  extreme cases of subset selection aggregation, where
$\cb_L=\cb_{SS}(r)$ and $\cb_{MS}=\cb_{SS}(1)$.

Since in practice $\btheta$ is unknown, the solution of (\ref{eq:aggreg}) is
unavailable. The goal then is to construct an estimator (linear aggregator)
$\btheta_{\widehat \bbeta}$ that mimics the ideal (oracle)
solution $\btheta_{\bbeta}$ of (\ref{eq:aggreg})
as close as possible. More precisely, we would like to find
$\btheta_{\widehat \bbeta}$ such that
\be \label{eq:excess}
\mE KL(\btheta,\btheta_{\widehat \bbeta}) \leq C
\inf_{\bbeta \in \cb}KL(\btheta,\btheta_{\bbeta}) + \Delta_{\cb}(\btheta,\btheta_{\widehat \bbeta}),\;\;\;
C \geq 1
\ee
with the minimal possible $\Delta_{\cb}(\btheta,\btheta_{\widehat \bbeta})$ (called {\em excess-KL}) and $C$ close to one.

For weakly collinear design,
Rigollet (2012, Theorem 4.1) established the minimal possible asymptotic rates for
$\Delta_{\cb}(\btheta,\btheta_{\widehat \bbeta})$ for linear, convex and model selection aggregation under
Assumption (A) and assumptions similar to those of Theorem \ref{th:lower}:
\be \label{eq:agrates}
\inf_{\btheta_{\widehat \bbeta}}\sup_{\btheta} \Delta_{\cb}(\btheta,\btheta_{\widehat \bbeta})=
\left\{
\begin{array}{ll}
O(r) & \cb=\cb_L\;\; {\rm(linear\;aggregation)})\\
O\left(\min(\sqrt{n \ln p},r)\right) & \cb=\cb_C\;\; {\rm(convex\;aggregation}) \\
O\left(\min(\ln p,r)\right) & \cb=\cb_{MS}\;\; {\rm(model\;selection\;aggregation)}
\end{array}
\right.
\end{equation}
He also proposed an estimator $\btheta_{\widehat \bbeta}$ that achieves these
optimal aggregation rates even with $C=1$ in (\ref{eq:excess}).

Using the results of Section \ref{sec:main} we can complete the case of
subset selection aggregation in GLM, where under the assumptions of Theorem 4.1 of
Rigollet (2012),
$\cb_{SS}(p_0)$ is essentially $\cb(p_0)$ considered in the context of
GLM model selection in previous sections.
Indeed, repeating the arguments in the proof of Theorem \ref{th:lower} (see
Appendix) implies
that for $\cb(p_0)$ there exists $C_2>0$ such that
\be \label{eq:SSaggr}
\inf_{\btheta_{\widehat \bbeta}}\sup_{\btheta} \Delta_{\cb(p_0)}(\btheta,\btheta_{\widehat \bbeta})
\geq C_2~ \frac{\cl}{\cu}~ \min\left(p_0 \ln\left(\frac{pe}{p_0}\right),r\right)
\ee
In particular, (\ref{eq:SSaggr}) also yields the lower bounds (\ref{eq:agrates})
for excess-KL for linear ($p_0=r$) and model selection ($p_0=1$) aggregation.
Furthermore, similar to model selection in GLM
within $\cb(p_0)$ considered in Section \ref{subsec:sparserisk},
from Theorem \ref{th:generalupper} it follows that for weakly collinear
design, the penalized
maximum likelihood estimator $\btheta_{\widehat \bbeta_{\widehat M}}$ with the
complexity penalty (\ref{eq:varpen}) achieves the
optimal rate (\ref{eq:SSaggr}) for subset selection aggregation over $\cb(p_0)$
for all $1 \leq p_0 \leq r$
(and, therefore, for linear and model selection aggregation in particular)
though with some $C>4/3$ in (\ref{eq:excess}). Similar to the results of
Rigollet \& Tsybakov (2011) for Gaussian regression,
we may conjecture that to get $C=1$ one should average
estimators from all models with properly chosen weights rather than select a single
one as in model selection. 

\subsection*{Acknowledgement} The work was supported by the Israel Science Foundation
(ISF), grant ISF-820/13. We are grateful to Alexander Goldenshluger, Ya'acov Ritov and Ron Peled for valuable remarks.

\section*{Appendix}

We first prove the following lemma establishing the equivalence of
the Kullback-Leibler divergence $KL(\btheta_1,\btheta_2)$ and the
squared quadratic norm $||\btheta_1-\btheta_2||^2$ under Assumption
(A) that will be used further in the proofs:
\begin{lemma} \label{lem:equiv}
Let Assumption (A) hold. Then, for any $\btheta_1, \btheta_2 \in
\mathbb{R}^n$ such that $\theta_{1i}, \theta_{2i} \in \Theta,\;i=1,\ldots,n$,
$$
\frac{\cl}{2a}||\btheta_1-\btheta_2||^2 \leq KL(\btheta_1,\btheta_2)
\leq \frac{\cu}{2a}||\btheta_1-\btheta_2||^2
$$
\end{lemma}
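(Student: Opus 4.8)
The plan is to establish the two-sided bound directly from the definition of the Kullback-Leibler divergence together with a second-order Taylor expansion of $b(\cdot)$, exploiting the convexity controls $\cl \le b''(\cdot) \le \cu$ from Assumption (A). Recall that for natural parameter vectors $\btheta_1,\btheta_2$ the divergence is
\[
KL(\btheta_1,\btheta_2)=\frac{1}{a}\left(b'(\btheta_1)^t(\btheta_1-\btheta_2)-(b(\btheta_1)-b(\btheta_2))^t{\bf 1}\right)
=\frac{1}{a}\sum_{i=1}^n\left\{b'(\theta_{1i})(\theta_{1i}-\theta_{2i})-b(\theta_{1i})+b(\theta_{2i})\right\}.
\]
Since $KL$ decomposes as a sum over coordinates and $\|\btheta_1-\btheta_2\|^2=\sum_i(\theta_{1i}-\theta_{2i})^2$, it suffices to prove the scalar inequality $\frac{\cl}{2a}(\theta_1-\theta_2)^2\le \frac{1}{a}\{b'(\theta_1)(\theta_1-\theta_2)-b(\theta_1)+b(\theta_2)\}\le \frac{\cu}{2a}(\theta_1-\theta_2)^2$ for each pair $\theta_1,\theta_2\in\Theta$ and then sum.

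The key step is a Taylor expansion of $b(\theta_2)$ around $\theta_1$ with the integral (or Lagrange) form of the remainder. First I would write $b(\theta_2)=b(\theta_1)+b'(\theta_1)(\theta_2-\theta_1)+\frac12 b''(\xi)(\theta_2-\theta_1)^2$ for some $\xi$ strictly between $\theta_1$ and $\theta_2$; rearranging gives exactly
\[
b'(\theta_1)(\theta_1-\theta_2)-b(\theta_1)+b(\theta_2)=\tfrac12 b''(\xi)(\theta_1-\theta_2)^2.
\]
Now the bounds follow by inserting the pointwise control on $b''$. For the upper bound I use $b''(\xi)\le \cu$, which holds by condition (a) of Assumption (A) for all $\xi\in\mathbb{R}$. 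For the lower bound I use $b''(\xi)\ge \cl$, which holds by condition (b) provided $\xi\in\Theta$; this is where the hypothesis $\theta_{1i},\theta_{2i}\in\Theta$ is essential, because $\Theta$ is assumed to be a (closed) interval and hence convex, so the intermediate point $\xi$ lies in $\Theta$ as well. Multiplying through by $\tfrac1a$ and summing over $i$ yields the claimed two-sided bound on $KL(\btheta_1,\btheta_2)$.

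The only real subtlety—and the step I expect to require the most care—is the justification that the Taylor remainder point $\xi$ belongs to $\Theta$, so that the lower bound $b''(\xi)\ge\cl$ applies. This relies precisely on the structural assumption that $\Theta$ is an interval: convexity of $\Theta$ guarantees that any point between two elements of $\Theta$ stays in $\Theta$. (By contrast, the upper bound is unconditional since $\sup_{t\in\mathbb{R}}b''(t)\le\cu$ is assumed over all of $\mathbb{R}$, matching the fact that $b''\ge 0$ always for an exponential family, so no interval restriction is needed there.) Once this is settled, the proof is a routine coordinatewise application of the mean-value form of Taylor's theorem, and no further machinery is needed.
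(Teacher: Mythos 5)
Your argument is exactly the paper's proof: a coordinatewise Taylor expansion of $b(\theta_{2i})$ around $\theta_{1i}$ with Lagrange remainder, giving $KL(\btheta_1,\btheta_2)=\frac{1}{2a}\sum_{i=1}^n b''(c_i)(\theta_{2i}-\theta_{1i})^2$ with $c_i\in\Theta$ by the interval assumption, followed by the bounds $\cl\le b''(c_i)\le\cu$. The point you flag as the subtlety (that the intermediate point lies in $\Theta$) is precisely the step the paper also singles out, so the proposal is correct and matches the paper's route.
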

\begin{proof}
Recall that for a GLM
\be \label{eq:a7}
KL(\btheta_1,\btheta_2)=
\frac{1}{a}\sum_{i=1}^n\left\{b'(\theta_{1i})(\theta_{1i}-\theta_{2i})-
b(\theta_{1i})+b(\theta_{2i})\right\}
\ee
A Taylor expansion of $b(\theta_{2i})$ around $\theta_{1i}$
yields $b(\theta_{2i})=b(\theta_{1i})+b'(\theta_{1i})(\theta_{2i}-\theta_{1i})
+\frac{b''(c_i)}{2}(\theta_{2i}-\theta_{1i})^2$, where $c_i$ lies between
$\theta_{1i}$ and $\theta_{2i}$, and substituting into (\ref{eq:a7}) we have
$$
KL(\btheta_1,\btheta_2)=\frac{1}{2a}\sum_{i=1}^n b''(c_i) (\theta_{2i}-
\theta_{1i})^2
$$
Due to Assumption (A), $\Theta$ is an
interval and, therefore, $c_i \in \Theta$. Hence, $\cl \leq b''(c_i) \leq \cu$
that completes the proof.
\end{proof}

\subsection*{Proof of Theorem \ref{th:generalupper}}
We introduce first some notation.
For a given model $M$, define
$$
\bbeta_M=\arg \inf_{\widetilde{\bbeta} \in \cb_M} KL(\btheta,X\widetilde{\bbeta}),
$$
where $\cb_M$ is given in (\ref{eq:mle}), and let $\btheta_M=X\bbeta_M$.
As we have mentioned in Section \ref{subsec:generalupper}, $\btheta_M$ can be
interpreted as the closest vector to $\btheta$ within the span generated by a
subset of columns of $X$ corresponding to $M$ w.r.t. a Kullback-Leibler
divergence. 
Recall also that $\widehat{\btheta}_M=X\widehat{\bbeta}_M$ is the MLE of $\btheta$ for
the model $M$ and, in particular, $\widehat{\btheta}_{\widehat M}=X\widehat{\bbeta}_{\widehat M}$.
Finally, for any random variable $\eta$ let $\varphi_\eta(\cdot)$ be its
moment generating function.

For the clarity of exposition, we split the proof into several steps.

\medskip
\noindent{\em Step 1.}
Since $\widehat{M}$ is the minimizer defined in (\ref{eq:penlik}), for any
given model $M$
\be \label{eq:a0}
-\ell(\widehat{\bbeta}_{\widehat M})+Pen(|\widehat{M}|) \leq
-\ell(\bbeta_M)+Pen(|M|)
\ee
By a straightforward calculus, one can easily verify that
\be \label{eq:a01}
KL(\btheta,\widehat{\btheta}_{\widehat M})-KL(\btheta,\btheta_M)=
\ell(\bbeta_M)-\ell(\widehat{\bbeta}_{\widehat M})+
\frac{1}{a}(\bY-b'(\btheta))^t(\widehat{\btheta}_{\widehat M}-\btheta_M)
\ee
and, hence, (\ref{eq:a0}) yields
\be \label{eq:a1}
KL(\btheta,\widehat{\btheta}_{\widehat M})+Pen(|\widehat{M}|) \leq
KL(\btheta,\btheta_M)+Pen(|M|)+
\frac{1}{a}(\bY-b'(\btheta))^t(\widehat{\btheta}_{\widehat M}-\btheta_M)
\ee
Note that $\mE Y=b'(\btheta)$, $\mE \left\{(\bY-b'(\btheta))^t \bzeta\right\}=0$ for
any deterministic vector $\bzeta \in \mathbb{R}^n$ and, therefore,
$$
\mE\left((\bY-b'(\btheta))^t(\widehat{\btheta}_{\widehat M}-\btheta_M)\right)=
\mE\left((\bY-b'(\btheta))^t(\widehat{\btheta}_{\widehat M}-\btheta)\right)
$$
Furthermore, by the definition of $\btheta_{\widehat M}$,
$KL(\btheta,\widehat{\btheta}_{\widehat M}) \geq
KL(\btheta,\btheta_{\widehat M})$, and since (\ref{eq:a1}) holds for any model
$M$ in the RHS,
we have
\be \label{eq:a2}
\begin{split}
\frac{3}{4}~\mE KL(\btheta,\widehat{\btheta}_{\widehat M}) & \leq
\inf_M \left\{KL(\btheta,\btheta_M)+Pen(|M|)\right\} \\
& +
\mE\left(\frac{1}{a}(\bY-b'(\btheta))^t(\widehat{\btheta}_{\widehat M}-\btheta)-
Pen(|\widehat{M}|)-\frac{1}{4} KL(\btheta,\btheta_{\widehat M})\right)
\end{split}
\ee

\medskip
\noindent
{\em Step 2.} Consider now the term
$\frac{1}{a}(\bY-b'(\btheta))^t(\widehat{\btheta}_{\widehat M}-\btheta)$ in the RHS of (\ref{eq:a2}).
The selected model $\widehat{M}$ in (\ref{eq:penlik}) can, in principle, be
any model $M$ and we want, therefore, to control it uniformly over $M$.
For any $M$ we have
\be \label{eq:a3}
\frac{1}{a}(\bY-b'(\btheta))^t(\widehat{\btheta}_M-\btheta)=
\frac{1}{a}(\bY-b'(\btheta))^t(\widehat{\btheta}_M-\btheta_M)+
\frac{1}{a}(\bY-b'(\btheta))^t(\btheta_M-\btheta) 
\ee

Let $\Xi_M$ be any orthonormal basis of the span of columns of $X$ 
corresponding to the model $M$
and $\bxi_M=\Xi_M \Xi_M^t (\bY-b'(\btheta))$ be the projection of $\bY-b'(\btheta)$ on this span.

Then, by the Cauchy-Schwarz inequality
\be
\label{eq:a31}
(\bY-b'(\btheta))^t(\widehat{\btheta}_M-\btheta_M)=\bxi_M^t(\widehat{\btheta}_M-\btheta_M)
\leq ||\bxi_M|| \cdot ||\widehat{\btheta}_M-\btheta_M||
\ee
Since $\widehat{\btheta}_M$ is the MLE for a given $M$,
$\ell(\widehat{\btheta}_M) \geq \ell(\btheta_M)$ and, therefore, (\ref{eq:a01})
implies
\be \label{eq:a32}
KL(\btheta,\widehat{\btheta}_M) \leq KL(\btheta,\btheta_M)+
\frac{1}{a}(\bY-b'(\btheta))^t(\widehat{\btheta}_M-\btheta_M)
\ee

Similar to the proof of Lemma 6.3 of
Rigollet (2012), using a Taylor expansion it
follows that under Assumption (A),
$KL(\btheta,\widehat{\btheta}_M)-KL(\btheta,\btheta_M) \geq
\frac{\cl}{2a}||\widehat{\btheta}_M-\btheta_M||^2$ that together with
(\ref{eq:a31}) and (\ref{eq:a32}) yields
\be \label{eq:a321}
\frac{1}{a}(\bY-b'(\btheta))^t(\widehat{\btheta}_M-\btheta_M) \leq
\frac{2}{a\cl} ||\bxi_M||^2
\ee

Define
$$
R(M)=\frac{2}{a\cl}||\bxi_M||^2+
\frac{1}{a}(\bY-b'(\btheta))^t(\btheta_M-\btheta)-Pen(|M|)-\frac{1}{4} KL(\btheta,\btheta_M)
$$
Then, from (\ref{eq:a2}),
\be \label{eq:a37}
\mE KL(\btheta,\widehat{\btheta}_{\widehat M}) \leq
\frac{4}{3}~ \inf_M \left\{KL(\btheta,\btheta_M)+Pen(|M|)\right\}+\frac{4}{3}~\mE R(\widehat{M})
\ee
and to complete the proof
we need to find an upper bound for $\mE R(\widehat{M})$.

\medskip
\noindent
{\em Step 3.}
Consider $\varphi_{||\bxi_M||^2}(\cdot)$. 
By (6.3) of Rigollet (2012), 
$$
\mE e^{{\bf w}^t(\bY-b'(\btheta))} \leq e^{\frac{\cu a ||{\bf w}||^2}{2}}
$$
for any ${\bf w} \in \mathbb{R}^n$. The projection matrix $\Xi_M \Xi_M^t$ is idempotent and $tr(\Xi_M \Xi_M^t)=|M|$. We can apply then
Remark 2.3 of Hsu, Kakade \& Zhang (2012) to have
\be \label{eq:mgfxi}
\varphi_{||\bxi_M||^2}(s) \leq \exp\left\{a \cu s |M|+\frac{a^2 \cu^2 s^2 |M|}{1-2a\cu s}\right\}
\ee
for all $0 < s < \frac{1}{2a \cu}$.

Consider now the random variable
$\eta_M=(\bY-b'(\btheta))^t(\btheta_M-\btheta)$.
Applying (6.3) in Lemma 6.1 of Rigollet (2012) yields
\be \label{eq:mgfeta}
\varphi_{\eta_M}(s) \leq \exp\left\{\frac{1}{2}s^2 \cu a ||\btheta_M-\btheta||^2\right\}
\ee

Define
$Z=\frac{2}{a\cl}(||\bxi_M||^2-a\cu|M|)+\frac{1}{a}\eta_M=R(M)+Pen(|M|)+
\frac{1}{4} KL(\btheta,\btheta_M)-2\frac{\cu}{\cl}|M|$. Unlike Gaussian regression,
$||\bxi||^2_M$ and $\eta_M$ are not independent.
However, by the Cauchy-Schwarz inequality
$$
\varphi_Z(s) \leq e^{-2\frac{\cu}{\cl}|M|s} \cdot \sqrt{\varphi_{\frac{2}{a\cl}||\bxi_M||^2}(2s)}
\cdot \sqrt{\varphi_{\frac{1}{a}\eta_M}(2s)}
$$
and from (\ref{eq:mgfxi}) and (\ref{eq:mgfeta}),
\be \label{eq:a5}
\varphi_Z(s) \leq
\exp\left\{
\frac{8\frac{\cu^2}{\cl^2} |M| s^2}{1-8\frac{\cu}{\cl}s}+\frac{\cu s^2}{a}||\btheta_M-\btheta||^2\right\} 
\ee
for all $0 < s < \frac{\cl}{8\cu}$.

Let $x=8\frac{\cu}{\cl}s\; (0 < x < 1)$ and $\rho=\frac{\cl^2 ||\btheta_M-\btheta||^2}{64a\cu}$. Then,
using the obvious inequality $\rho x^2 < \rho x$ for $0 < x < 1$,
after a straightforward calculus (\ref{eq:a5}) yields
$$
\ln \varphi_{\frac{\cl}{8\cu}Z-\rho}(x)
\leq \frac{|M|}{8} \frac{x^2}{1-x}
$$
for all $0 < x < 1$.

We can now apply Lemma 2 of Birg\'e \& Massart (2007) to get
$P(\frac{\cl}{8\cu}Z-\rho \geq \sqrt{\frac{|M|}{2}~t}+t) \leq e^{-t}$ for all $t>0$, that
is,
$$
P\left\{\frac{\cl}{8\cu}\left(R(M)+Pen(|M|)+\frac{1}{4} KL(\btheta,\btheta_M)-\frac{\cl||\btheta_M-\btheta||^2}{8a}\right)
\geq \frac{|M|}{4}+\sqrt{\frac{|M|}{2}~t}+t\right\} \leq e^{-t}
$$
Lemma \ref{lem:equiv} implies that $\frac{1}{4} KL(\btheta,\btheta_M)-\frac{\cl||\btheta_M-\btheta||^2}{8a} \geq 0$ and, therefore,
\be \label{eq:a6}
P\left\{\frac{\cl}{8\cu}\left(R(M)+Pen(|M|)\right) \geq \frac{|M|}{4}+\sqrt{\frac{|M|}{2}~t}+t\right\} \leq e^{-t}
\ee

\medskip
\noindent
{\em Step 4.} Based on (\ref{eq:a6}) we can now find an upper bound for
$\mE R(\widehat{M})$.

Let $k=|M|$ and take $t=kL_k+\omega$ for any $\omega>0$, where $L_k>0$ are
the weights from Theorem \ref{th:generalupper}. Using inequalities
$\sqrt{c_1+c_2} \leq \sqrt{c_1}+\sqrt{c_2}$ and $\sqrt{c_1 c_2} \leq \frac{1}{2}(c_1\epsilon+
\frac{c_2}{\epsilon})$ for any positive  $c_1, c_2$ and $\epsilon$, we have
$$
\sqrt{kt} \leq k \sqrt{L_k}+\sqrt{k\omega} \leq k \sqrt{L_k}+\frac{1}{2}\left(k \epsilon+\frac{\omega}{\epsilon}\right)
$$
and, therefore,
$$
P\left\{\frac{\cl}{8\cu}\left(R(M)+Pen(k)\right)
\geq \frac{k}{4}\left(1+\sqrt{2}~\epsilon+2\sqrt{2 L_k}+4L_k\right)+\omega \left(1+\frac{1}{2\sqrt{2}~\epsilon}\right)\right\} \leq
e^{-(kL_k+\omega)}
$$
For the penalty $Pen(k)$
satisfying (\ref{eq:penalty}) with some $A>1$ and
$\epsilon=(A-1)/\sqrt{2}$, we then have
\be \label{eq:a8}
P\left\{\frac{\cl}{4\cu}R(M) \geq \omega \frac{2A-1}{A-1}\right\}
\leq e^{-(kL_k+\omega)}
\ee
for all $M$.

Finally, under the condition (\ref{eq:weights}) on the weights $L_k$,
(\ref{eq:a8}) implies
$$
P\left\{R(\widehat{M}) \geq \frac{4\cu}{\cl}~\omega \frac{2A-1}{A-1}\right\}
\leq \sum_M P\left\{R(M) \geq \frac{4\cu}{\cl}~\omega\frac{2A-1}{A-1}\right\}
\leq \sum_M e^{-(kL_k+\omega)} \leq S e^{-\omega}
$$
and, hence,
$$
\mE R(\widehat{M}) \leq \int_0^\infty P(R(\widehat{M})>t)dt \leq 
\frac{4\cu}{\cl}~ \frac{2A-1}{A-1}~ S
$$
that together with (\ref{eq:a37}) completes the proof.
\qed

\subsection*{Proof of Theorem \ref{th:lower}}
Due to Lemma \ref{lem:equiv}, the minimax lower bound for the Kullback-Leibler
risk can be reduced to the lower bound for the corresponding quadratic risk:
\be \label{eq:a20}
\inf_{\widetilde \btheta}\sup_{\bbeta \in \cb(p_0)}\mE
KL(X\bbeta,\widetilde{\btheta}) \geq \frac{\cl}{2a}\inf_{\widetilde \btheta}\sup_{\bbeta \in \cb(p_0)} \mE ||X\bbeta-\widetilde{\btheta}||^2
\ee

Following a general reduction scheme for establishing the minimax risk lower bounds, the quadratic risk in (\ref{eq:a20}) is first reduced to the probability of misclassification error among 
a properly chosen finite subset $\Theta^*(p_0) \subset \{\btheta \in \mathbb{R}^n: \btheta=X\bbeta,\;
\bbeta \in \cb(p_0)\}$ such that for any $\btheta_1, \btheta_2 \in \Theta^*(p_0)$,
$||\btheta_1-\btheta_2||^2 \geq 4 s^2(p_0)$:
$$
\inf_{\widetilde \btheta}\sup_{\bbeta \in \cb(p_0)} \mE ||X\bbeta-\widetilde{\btheta}||^2 ~\geq~ \inf_{\widetilde \btheta} \max_{\btheta_j \in  \Theta^*(p_0)} \mE||\btheta_j-\widetilde{\btheta}||^2 
~\geq ~4 s^2(p_0) \inf_{\widetilde \btheta} \max_{\btheta_j \in  \Theta^*(p_0)}
P_{\btheta_j}(\widetilde{\btheta} \neq \btheta_j)
$$
and then bounding the latter from below (e.g., applying various versions of Fano' lemma). See Tsybakov (2009, Section 2) for more details.
  
In particular, the idea of our proof is to find a finite subset $\cb^*(p_0) \subseteq \cb(p_0)$
of vectors $\bbeta$ and the corresponding subset
$\Theta^*(p_0)=\{\btheta \in \mathbb{R}^n: \btheta=X\bbeta,\;
\bbeta \in \cb^*(p_0)\}$ such that for any $\btheta_1, \btheta_2 \in \Theta^*(p_0)$,
$||\btheta_1-\btheta_2||^2 \geq 4 s^2(p_0)$ and
$KL(\btheta_1,\btheta_2) \leq (1/16) \ln {\rm card}(\Theta^*(p_0))$.
It will follow then from Lemma A.1 of Bunea {\em et al.} (2007) that
$s^2(p_0)$ is the minimax lower bound for the quadratic risk over $\cb(p_0)$.

To construct such subsets we can exploit the techniques similar
to that used in the corresponding proofs for the quadratic risk in linear regression
(e.g., Abramovich \& Grinshtein, 2010; Rigollet \& Tsybakov, 2011). Consider
three cases.

\medskip
\noindent {\sc Case 1}. $p_0 \leq r/2$
\newline
Define the subset $\widetilde{\cb}(p_0)$ of all vectors
$\bbeta \in \mathbb{R}^p$ that have $p_0$ entries equal to $C_{p_0}$,
where $C_{p_0}$ will be defined below
and others are zeros: $\widetilde{\cb}(p_0)=
\{\bbeta \in \mathbb{R}^p: \bbeta \in \{0,C_{p_0}\}^p,
||\bbeta||_0=p_0\}$.
From Lemma A.3 of Rigollet \& Tsybakov (2011), there exists a subset
$\cb^*(p_0) \subset \widetilde{\cb}(p_0)$ such that
$\ln{\rm card}(\cb^*(p_0)) \geq \tilde{c} p_0 \ln\left(\frac{pe}{p_0}\right)
$ for some constant $0< \tilde{c} < 1$, and for any pair $\bbeta_1,\;\bbeta_2
\in \cb^*(p_0)$, the Hamming distance
$\rho(\bbeta_1,\bbeta_2)=\sum_{j=1}^p \mathbb{I}\{\beta_{1j} \neq \beta_{2j}\}
\geq \tilde{c}p_0$.

Take $C^2_{p_0}=\frac{1}{16}\tilde{c}\frac{a}{\cu}\phi^{-1}_{\max}[2p_0]\ln\left(
\frac{pe}{p_0}\right)$.
By the assumptions of the theorem, $\cb^*(p_0) \subset \widetilde{\cb}(p_0)  \subseteq \cb(p_0)$.
Consider the corresponding subset $\Theta^*(p_0)$. Evidently,
${\rm card}(\Theta^*(p_0))={\rm card}(\cb^*(p_0))$, and for any $\btheta_1,
\btheta_2 \in \Theta^*(p_0)$ associated with $\bbeta_1, \bbeta_2 \in
\cb^*(p_0)$ we then have
\be \label{eq:a21}
||\btheta_1-\btheta_2||^2 = ||X(\bbeta_1-\bbeta_2)||^2 \geq \phi_{min}[2p_0] \;
||\bbeta_1-\bbeta_2||^2 \geq \tilde{c}\phi_{min}[2p_0]C^2_{p_0}\;p_0=
4s^2(p_0),
\ee
where $s^2(p_0)=\frac{1}{64}\frac{a}{\cu} \tilde{c}^2 \tau[2p_0] p_0\ln\left(\frac{pe}{p_0}
\right)$.

On the other hand,
\be \label{eq:a22}
K(\btheta_1,\btheta_2) \leq \frac{\cu}{2a} ||\btheta_1-\btheta_2||^2
\leq \frac{\cu}{2a}~
\phi_{max}[2p_0] C^2_{p_0} \rho(\bbeta_1,\bbeta_2)
\leq \frac{\cu}{a}~ \phi_{max}[2p_0] C^2_{p_0} p_0 \leq \frac{1}{16}
\ln {\rm card}(\Theta^*(p_0)),
\ee
where the first inequality follows from Lemma \ref{lem:equiv}.
Lemma A.1 of Bunea {\em et al.} (2007) and (\ref{eq:a20}) complete then the
proof for this case.

\medskip
\noindent {\sc Case 2}. $r/2 \leq p_0 \leq r,\;p_0 \geq 8$
\newline
In this case consider the subset $\widetilde{\cb}(p_0)=\{\bbeta \in \mathbb{R}^p:
\bbeta \in \{\{0,C_{p_0}\}^{p_0},0,\ldots,0\}$, where
$C^2_{p_0}=\frac{\ln 2}{64}\frac{a}{\cu}\phi^{-1}_{\max}[p_0]$. From
the assumptions of the theorem  $\widetilde{\cb}(p_0) \subseteq \cb(p_0)$.
Varshamov-Gilbert bound (see, e.g., Tsybakov, 2009, Lemma 2.9) guarantees
the existence of a subset $\cb^*(p_0) \subset \widetilde{\cb}(p_0)$ such that
$\ln{\rm card}(\cb^*_{p_0}) \geq \frac{p_0}{8} \ln 2$ and the Hamming distance
$\rho(\bbeta_1,\bbeta_2) \geq \frac{p_0}{8}$ for any pair $\bbeta_1,\;\bbeta_2
\in \cb^*_{p_0}$.

Note that for any $\bbeta_1,\;\bbeta_2 \in \cb^*_{p_0}$,
$\bbeta_1-\bbeta_2$ has at most $p_0$ nonzero components and
repeating the arguments for the Case 1, one obtains the minimax
lower bound $s^2(p_0)=C\frac{a}{\cu}\tau[p_0]p_0 \geq
\frac{C}{2}\frac{a}{\cu}\tau[p_0]r$ for the quadratic risk. Applying
(\ref{eq:a20}) completes the proof.

\medskip
\noindent {\sc Case 3}. $r/2 \leq p_0 \leq r,\;2 \leq p_0 < 8$
\newline
For this case, obviously, $2 \leq r < 16$.
Consider a trivial subset $\cb^*_{p_0}$ containing
just two vectors $\bbeta_1 \equiv 0$ and $\bbeta_2$ that has first $p_0$ nonzero
entries equal to $C_{p_0}$, where $C^2_{p_0}= \frac{\ln 2}{64}\frac{a}{\cu}\phi^{-1}_{max}[p_0]$.
Under the assumptions of the theorem $\cb^*_{p_0} \subset \cb(p_0)$. For the
corresponding $\btheta_1=X\bbeta_1$ and $\btheta_2=X\bbeta_2$,
(\ref{eq:a21}) and (\ref{eq:a22}) yield
$$
KL(\btheta_1,\btheta_2) \leq \frac{\cu}{2a} \phi_{max}[p_0] 8 C_{p_0}^2 =
\frac{1}{16} \ln{\rm card}(\Theta^*_{p_0})
$$
and
$$
||\btheta_1-\btheta_2||^2 \geq \phi_{min}[p_0] p_0 C^2_{p_0} =
C \frac{a}{\cu} \tau[p_0]p_0 \geq
\frac{C}{2} \frac{a}{\cu} \tau[p_0]r
$$
and the proof follows from Lemma A.1 of Bunea {\em et al.} (2007).
\qed

\bigskip
\vspace*{-3pt}

\end{document}